\keywords{Extension class of a vector bundle, infinitesimal Torelli problem, Gorenstein curve, generalized divisors} 
\subjclass[2010]{14C34, 14M05}
\theoremstyle{plain}
\newtheorem{thm}{Theorem}[subsection]
\newtheorem{prop}[thm]{Proposition}
\newtheorem{cor}[thm]{Corollary}
\newtheorem{lem}[thm]{Lemma}
\theoremstyle{definition}
\newtheorem{defn}[thm]{Definition}
\newtheorem{rmk}[thm]{Remark}
\newcommand{\sA}{\mathcal{A}}
\newcommand{\sB}{\mathcal{B}}
\newcommand{\sC}{\mathcal{C}}
\newcommand{\sE}{\mathcal{E}}
\newcommand{\sF}{\mathcal{F}}
\newcommand{\sG}{\mathcal{G}}
\newcommand{\sI}{\mathcal{I}}
\newcommand{\sK}{\mathcal{K}}
\newcommand{\sL}{\mathcal{L}}
\newcommand{\sN}{\mathcal{N}}
\newcommand{\sM}{\mathcal{M}}
\newcommand{\sO}{\mathcal{O}}
\newcommand{\mC}{\mathbb{C}}
\newcommand{\mP}{\mathbb{P}}
\newcommand{\Ima}{\mathrm{Im}\,}
\newcommand{\Hom}{\mathrm{Hom}}
\numberwithin{equation}{section}
\newcommand{\beba}  {\begin{equation}\begin{array}{rcl}}
\newcommand{\eaee}  {\end{array}\end{equation}}
\let\oldtocsection=\tocsection
\let\oldtocsubsection=\tocsubsection
\let\oldtocsubsubsection=\tocsubsubsection
\renewcommand{\tocsection}[2]{\hspace{0em}\oldtocsection{#1}{#2}}
\renewcommand{\tocsubsection}[2]{\hspace{1em}\oldtocsubsection{#1}{#2}}
\renewcommand{\tocsubsubsection}[2]{\hspace{2em}\oldtocsubsubsection{#1}{#2}}
\title{A note on Torelli-type theorems for Gorenstein curves}
\author{Luca Rizzi}
\address{D.I.M.I. \\
the University of Udine\\
Udine, 33100 Italy\\
\texttt{rizzi.luca@spes.uniud.it}}
\author{Francesco Zucconi}
\address{D.I.M.I. \\
the University of Udine\\
Udine, 33100 Italy\\
\texttt{Francesco.Zucconi@dimi.uniud.it}}
\begin{document}

\markboth{Rizzi and Zucconi}{A note on Torelli-type theorems for Gorenstein curves}

\begin{abstract}
Using the notion of generalized divisors introduced by Hartshorne (see \cite{H2}), we adapt the theory of adjoint forms to the case of Gorenstein curves. We show an infinitesimal Torelli-type theorem for vector bundles on Gorenstein curves. We also construct explicit counterexamples to the infinitesimal Torelli claim in the case of a reduced reducible Gorenstein curve.
\end{abstract}

\maketitle
\tableofcontents

\section{Introduction} 

Let
$\pi\colon \sC\to \Delta$ be a smooth family of smooth curves of genus $g$ over a complex polydisk $\Delta$ and let  $0$ be a point of $\Delta$. 
By Ehresmann's theorem, after possibly shrinking the base,  we have an isomorphism between 
$H^{1}(C_{t},\mathbb C)$ and $V= H^{1}(C_{{0}},\mathbb C)$ where
$t\in \Delta$ and $C_t$ is the fiber over $t$. It remains defined the {\it{period map}} 
$$\Phi:\Delta\rightarrow \mathbb G={\rm{Grass}}(g, V)$$
({\it cf.}~e.g., \cite{gr-s}) which is the map that to each $t\in \Delta$ associates the subspace
$H^{10}(C_{t})$ of $V$ where $g={\rm{dim}}_{\mathbb C}H^{10}(C_{{0}})$. In a much more general context Griffiths in \cite{grif1} and \cite{grif2} 
proved that $\Phi$ is holomorphic, that the image of the differential
$d\Phi_{0}:T_{\Delta,{0}}\rightarrow T_{{\mathbb{G}}, [H^{10}(C_0)]}$ 
is actually contained in ${\rm{Hom}}(H^{10}(C_{0},\mathbb C), 
H^{0,1}(C_{0},\mathbb
C))$
and finally 
he showed that $d\Phi_{0}$ is
the composition of the Kodaira-Spencer map $ T_{\Delta,0}\rightarrow
H^{1}(C_{0},\Theta_{C_{0}})$ with the map given by the cup product and the
interior product:
\begin{equation}
\cup\colon H^{1}(C_{0},\Theta_{C_{0}})\rightarrow  
\Hom (H^{0}(C_0, \Omega^{1}_{C_0}), H^{1}(C_0, \sO_{C_0}))
\end{equation}
where $\Theta_{X}$ is the tangent sheaf of a variety $X$ and where we use the isomorphisms $H^{10}(C_0)\sim H^{0} (C_0, \Omega^{1}_{C_0})$ and 
$H^{01}(C_0)\sim H^{1}(C_0, \sO_{C_0})$. If $\pi \colon \sC \to \Delta$ is the local Kuranishi family ({\it cf.}~\cite{K-M}, \cite{K}) of $C_0$ the Kodaira-Spencer map $ T_{\Delta,0}\rightarrow
H^{1}(C_{0},\Theta_{C_{0}})$ is an isomorphism and the infinitesimal Torelli problem asks if the period map $\pi \colon \sC \to \Delta$ ({\it cf.}~\cite{K-M}, \cite{K}) of $C_0 $ is locally an immersion. In other words we say that the infinitesimal 
Torelli theorem holds for $C_0$ if $\cup\colon H^{1}(C_{0},\Theta_{C_{0}})\rightarrow  
\Hom (H^{0}(C_0, \Omega^{1}_{C_0}), H^{1}(C_0, \sO_{C_0}))$ is injective. 

For a smooth algebraic curve $C$ of genus $g\geq 1$ the infinitesimal Torelli theorem
holds iff $g(C) = 1,2$ or iff
$g(C) \geq 3$ and $C$ is not hyperelliptic ({\it cf.}~\cite{to}, \cite{andr}, 
\cite{we}, \cite{OS}). Actually it holds a more deep result known as the Torelli theorem. Call $\sM_g$ the coarse moduli space of complete non-singular curves of genus $g$ and $\sA_g$ the moduli space of principally polarized abelian varieties of dimension $g$. Then the Torelli map $\tau_g\colon \sM_g\to\sA_g$ which sends the isomorphism class of a curve to the isomorphism class of its Jacobian is injective. There is a large literature concerning the problem of extending the Torelli map to a morphism $\overline{\tau_g}\colon \overline{\sM_g}\to \overline{\sA_g}$ where $\overline{\sM_g}$ is the Deligne-Mumford compactification of $\sM_g$ (see \cite{Na} and \cite{CV}) and $\overline{\sA_g}$  is a suitable compactification of ${\sA_g}$. Clearly the problem varies according to the chosen compactification of $\sA_g$: see \cite{CV}.
 
In the case of irreducible stable  curves Namikawa proved that the canonical map from the open set of irreducible stable curves to the normalization of the Satake compactification of $\sA_g$ is 
injective; see: \cite[Theorem 7 page 245]{Na}. He proved that if $C$ is an irreducible stable curve of genus $g$ whose normalization is a non-hyperelliptic curve of genus $>2$ then $C$ is uniquely determined by its generalized Jacobian \cite[Proposition 9 page 245]{Na}. He also showed that the above map can't be injective over the divisor $\sN=\cup_{i=1}^{[\frac{g}{2}]}\sN_i$ where $\sN_i$ is the divisor whose general points correspond to stable curves with two non-singular irreducible components $C_1$, $C_2$ with genus $i$ and $g-i$ meeting at one point.

Note that the homomorphism
\begin{equation}
\label{problema}
\cup\colon \text{Ext}^1(\Omega^1_C,\sO_C)\rightarrow
\Hom (H^{0}(C_0, \Omega^{1}_{C}), H^{1}(C, \sO_{C}))
\end{equation} fails to be injective if there exists $\xi\in {\rm{Ext}}^1(\Omega^1_C,O_C)$, $\xi\neq 0$ such that for one of the associated extensions
\begin{equation}
0\to \sO_C\to \sE\to \Omega^1_C\to 0
\end{equation} the coboundary homomorphism $\partial_{\xi}\colon H^0(C,\Omega^1_C)\to H^1(C,\sO_C)$ is trivial. 

In this paper we study the linear part of the infinitesimal Torelli problem, that is the injectivity of (\ref{problema}). In particular we focus on the general case of Gorenstein curves.

We study the following problem: let $C$ be a Gorenstein curve and let $\xi$ be an element of ${\rm{Ext}}^1(\sL,O_C)$ such that for one of the associated extensions
\begin{equation}
0\to \sO_C\to \sE\to \sL\to 0
\end{equation} it holds that the coboundary homomorphism $\partial_{\xi}\colon H^0(C,\Omega^1_C)\to H^1(C,\sO_C)$ is trivial. Is it true that $\xi=0$? We prove:
\begin{thm}
Let $C$ be an irreducible Gorenstein curve, and let $\sL$ be a locally free sheaf of rank one on $C$. Consider the extension
\begin{equation}
0\to\sO_C\to\sE\to\sL\to0
\end{equation} given by an element $\xi\in {\rm{Ext}}^1(\sL,\sO_C)\cong H^1(C,\sL^\vee)$. Call $F$ the fixed part of the linear system associated to $\sL$ and assume that $F$ does not contain singularities; call $M$ its mobile part. Assume that the map $\phi_M$ given by $M$ is of degree one and that $l:=\dim |M|\geq3$. If the cohomology map
\begin{equation}
H^0(C,\sE)\to H^0(C,\sL)
\end{equation} is surjective, then $\xi\in \ker(H^1(C,\sF^\vee)\to H^1(C,\sF^\vee(F)))$.
\end{thm}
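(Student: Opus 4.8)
The plan is to reformulate both the hypothesis and the conclusion through the long exact cohomology sequence and Serre duality, thereby reducing the whole statement to a Noether-type surjectivity for the base-point-free part $M$. First I would translate the hypothesis: the long exact sequence of $0\to\sO_C\to\sE\to\sL\to0$,
\[
0\to H^0(C,\sO_C)\to H^0(C,\sE)\to H^0(C,\sL)\xrightarrow{\partial_\xi}H^1(C,\sO_C)\to\cdots,
\]
shows that surjectivity of $H^0(C,\sE)\to H^0(C,\sL)$ is exactly the vanishing of the connecting map $\partial_\xi$, i.e. cup product with $\xi$ kills every section of $\sL$. Since $F$ contains no singular point, Hartshorne's generalized divisors make $\sO_C(F)$ invertible, so $\sM:=\sL(-F)$ is a genuine line bundle with $\sL\cong\sM\otimes\sO_C(F)$ and a canonical section $s_F$ cutting out $F$; because $F$ is the full fixed part, $|M|$ is base-point-free, $\phi_M$ is a morphism, and $H^0(C,\sL)=s_F\cdot H^0(C,\sM)$.

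Next I would identify the target. A computation with generalized divisors gives $\sL^\vee(F)\cong\sM^\vee$, under which the natural map $H^1(C,\sL^\vee)\to H^1(C,\sL^\vee(F))$ becomes the restriction $H^1(C,\sL^\vee)\to H^1(C,\sM^\vee)$ sending $\xi$ to the class $\bar\xi$ of the extension $0\to\sO_C\to\sE''\to\sM\to0$ obtained by pulling back along the inclusion $\sM\hookrightarrow\sL$ (multiplication by $s_F$). Hence the desired conclusion $\xi\in\ker(\cdots)$ is precisely $\bar\xi=0$. By associativity of the cup product one has $\partial_{\bar\xi}(m)=\xi\cup(s_Fm)=\partial_\xi(s_Fm)=0$ for every $m\in H^0(C,\sM)$, so the vanishing of $\partial_\xi$ automatically forces $\partial_{\bar\xi}=0$. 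It therefore suffices to prove the core statement: on the base-point-free $\sM$ with $\phi_M$ birational and $l\geq3$, $\partial_{\bar\xi}=0$ implies $\bar\xi=0$.

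To attack the core I would dualize via Serre duality, which is available because $C$ is Gorenstein and $\omega_C$ is invertible: one has $H^1(C,\sM^\vee)^\vee\cong H^0(C,\sM\otimes\omega_C)$, and, compatibly with cup product, $\partial_{\bar\xi}$ is dual to the multiplication map $\mu_\sM\colon H^0(C,\sM)\otimes H^0(C,\omega_C)\to H^0(C,\sM\otimes\omega_C)$. Consequently $\bar\xi=0$ for every $\bar\xi$ with $\partial_{\bar\xi}=0$ if and only if $\mu_\sM$ is surjective. This is the Gorenstein analogue of a Noether--Castelnuovo surjectivity: surjectivity is to be expected exactly because $\phi_M$ has degree one (the hyperelliptic-type degree-two case, where such maps fail, being excluded) and because $l\geq3$ gives a non-degenerate image in $\mP^{l}$.

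The hard part will be establishing this surjectivity on a possibly singular curve, and this is where the adjoint-form machinery, adapted through generalized divisors, should do the work. Concretely I would build adjoint forms out of lifts to $H^0(C,\sE)$ of pairs of sections of $\sM$ (possible since $\partial_\xi=0$), using that $\det\sE\cong\sL$, and then invoke Castelnuovo's general position theorem for the birational morphism $\phi_M$, whose general divisor in $|M|$ is reduced and supported in the smooth locus (because $|M|$ is base-point-free and, with $F$ avoiding $\Sing\,C$, stays away from the singularities), to show that these adjoint forms fill $H^0(C,\sM\otimes\omega_C)$ modulo the image of $\mu_\sM$. The genuinely delicate points are the correct formulation of the general-position/uniform-position property through $\phi_M$ on a Gorenstein curve and the careful bookkeeping of $\omega_C$ and of the divisor operations near the singular points, both of which are precisely what Hartshorne's theory of generalized divisors is meant to control.
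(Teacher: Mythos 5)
Your reduction is correct, and it takes a genuinely different and more economical route than the paper's. The paper runs the adjoint machinery: for each divisor $D$ of $l-1$ general points it extracts the pencil $W=H^0(C,\sL(-D-F))$, lifts it to $\hat{\sE}$, and uses Theorem \ref{aggiunta} together with the general position theorem to conclude $\xi_{D+F}=0$; only afterwards does it dualize and invoke the Castelnuovo-type surjectivity (Theorem \ref{castelnuovo1}) to assemble the family of vanishings $\xi_{D+F}=0$ into $\xi_F=0$. You shortcut the first stage entirely: since $\partial_\xi=0$ and $H^0(C,\sL)=s_F\cdot H^0(C,\sM)$, compatibility of cup products shows at once that $\xi_F$ annihilates $\Ima\bigl(H^0(C,\sM)\otimes H^0(C,\omega_C)\to H^0(C,\omega_C\otimes\sM)\bigr)$ under Serre duality, so the whole theorem reduces in one step to the surjectivity of this multiplication map --- which is exactly the $l=1$ case of the paper's Theorem \ref{castelnuovo1}, the Gorenstein Castelnuovo--Noether statement proved by the general position argument of \cite{ACGH}. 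Your route buys a proof with no adjoint forms at all; the paper's route exercises the adjoint formalism it needs elsewhere (e.g.\ in the genus $3$ case of Theorem \ref{torelli2}). One correction to your final paragraph: the surjectivity of the multiplication map is a statement about $\sM$ and $\omega_C$ alone, independent of $\xi$, $\sE$ and of any liftings of sections, so adjoint forms are the wrong tool there; that step should simply be proved (or quoted) as Theorem \ref{castelnuovo1}, and your hypotheses --- $|M|$ base point free with general member avoiding $\Sing C$, $\phi_M$ of degree one, $\dim|M|\geq 3$ --- are precisely what its general-position proof requires.
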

For the proof see Theorem \ref{teorema1}. We stress that we use the notion of generalized divisor introduced in \cite{H2}. We also construct a theory of adjoint forms which extends the previous one of \cite[Section 1]{CP} and \cite[Section 3]{PZ} to the case of a Gorenstein irreducible curve following the approach of \cite[Theorem 2.1.7]{RZ1}. For this reason we need a control on the fixed part $F$.

As a Corollary we obtain 

\begin{thm}
\label{corint}
Let $C$ be an irreducible Gorenstein curve of genus $2$ or non-hyperelliptic of genus $\geq3$, $\xi\in H^1(C,\omega_C^\vee)$ such that $\partial_{\xi}=0$. Then $\xi=0$. 
\end{thm}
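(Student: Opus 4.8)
The plan is to apply Theorem \ref{teorema1} with $\sL=\omega_C$, supplementing it by a direct computation in the low-genus cases where its hypotheses cannot be met. First note that, since $\omega_C$ is invertible on the Gorenstein curve $C$, we have $\xi\in H^1(C,\omega_C^\vee)\cong \mathrm{Ext}^1(\omega_C,\sO_C)$, and the extension
\begin{equation}
0\to\sO_C\to\sE\to\omega_C\to0
\end{equation}
fits into the long exact cohomology sequence whose connecting map is exactly $\partial_\xi$. Hence the assumption $\partial_\xi=0$ forces $H^0(C,\sE)\to H^0(C,\omega_C)$ to be surjective, so the surjectivity hypothesis of Theorem \ref{teorema1} is automatically verified. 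It is convenient to record the dual reformulation: by Serre duality for the Gorenstein curve $C$ (with dualizing sheaf $\omega_C$), the injectivity of $\cup$ on $H^1(C,\omega_C^\vee)$ is equivalent to the surjectivity of the bicanonical multiplication map $\mathrm{Sym}^2 H^0(C,\omega_C)\to H^0(C,\omega_C^{\otimes2})$.

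For the main range $g\geq4$ with $C$ non-hyperelliptic, the sheaf $\omega_C$ is very ample on the integral Gorenstein curve $C$, so its linear system has empty fixed part $F=0$ and mobile part $M=\omega_C$, the associated map $\phi_M$ is the canonical embedding and thus has degree one, and $l=\dim|M|=g-1\geq3$. Theorem \ref{teorema1} then applies and gives $\xi\in\ker\bigl(H^1(C,\omega_C^\vee)\to H^1(C,\omega_C^\vee)\bigr)$; since $F=0$ this map is the identity, so $\xi=0$.

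It remains to treat $g=2$ and $g=3$, where $l=g-1<3$ (and for $g=2$ the canonical map is $2:1$), so Theorem \ref{teorema1} is unavailable; here I would argue through the dual reformulation above. For $g=2$ a Riemann--Roch computation gives $h^0(\omega_C)=2$ and $h^0(\omega_C^{\otimes2})=3$, and the three products of a basis of $H^0(\omega_C)$ are the pullbacks of a basis of $H^0(\mP^1,\sO(2))$ along the degree-two map defined by $|\omega_C|$; these are independent, so the multiplication map is an isomorphism, hence surjective. For $g=3$ non-hyperelliptic, the canonical model is a plane quartic, and surjectivity of $\mathrm{Sym}^2H^0(\omega_C)\to H^0(\omega_C^{\otimes2})$ reduces to the vanishing $H^1(\mP^2,\sO_{\mP^2}(-2))=0$. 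In both cases Serre duality converts this surjectivity back into the injectivity of $\cup$, whence $\xi=0$.

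The main obstacle is precisely that Theorem \ref{teorema1} is calibrated to systems with $l\geq3$ and generically injective maps, so it cannot by itself reach the small genera; these must be handled separately, and one must check that the classical arguments (Max Noether's theorem and projective normality of the canonical model) persist for an integral \emph{Gorenstein} curve. This rests on the facts that $\omega_C$ is globally generated, and very ample in the non-hyperelliptic case, for integral Gorenstein curves, and that Serre duality holds with $\omega_C$ as dualizing sheaf; granting these, the identification of the kernel in the conclusion of Theorem \ref{teorema1} with $\{0\}$ when $F=0$ is routine.
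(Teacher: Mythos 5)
Your argument is correct, and for genus $\geq 4$ it coincides with the paper's: both apply Theorem \ref{teorema1} with $\sL=\omega_C$, $F=0$ and $l=g-1\geq3$. Where you genuinely diverge is in the two low-genus cases. The paper stays inside its adjoint machinery: for $p_a=2$ it takes $W=H^0(C,\omega_C)$, which is already the whole $2$-dimensional space, invokes Remark \ref{casobase} and Corollary \ref{aggiunta} to get $\xi_{D_W}=0$, and uses global generation of $\omega_C$ (\cite[Theorem 1.6]{H2}) to see $D_W=0$; for $p_a=3$ it twists down by a smooth point $P$ so that $h^0(C,\omega_C(-P))=2$, forces the adjoint to be a combination of $\eta_1,\eta_2$, concludes $\xi_P=0$ for every $P$, and then removes the twist via Theorem \ref{noether1}. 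You instead dualize: $\partial_\xi=0$ means exactly that $\xi$ annihilates the image of $\mathrm{Sym}^2H^0(C,\omega_C)\to H^0(C,\omega_C^{\otimes2})$ under Serre duality (which is valid here since $\omega_C$ is invertible), so it suffices to prove this multiplication map surjective, which you do by hand: a $3=3$ dimension count through the degree-two map to $\mP^1$ for $g=2$, and the vanishing $H^1(\mP^2,\sO_{\mP^2}(-2))=0$ for the plane quartic canonical model when $g=3$. Your route is shorter and more classical --- it is Max Noether plus Serre duality, and your $g=3$ step is precisely the $l=2$ case of the paper's own Theorem \ref{noether} --- while the paper's route keeps everything phrased in terms of the adjoint-form technique it is developing and never passes to the dual. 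Both arguments rest on the same nontrivial inputs about integral Gorenstein curves, which you correctly flag rather than prove: global generation of $\omega_C$, and very ampleness (birationality of the canonical map would suffice for $g\geq4$) in the non-hyperelliptic case; these are available from \cite{H2}, \cite{KM} and \cite{M}, so granting them is legitimate and does not leave a gap.
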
  We recall that for an irreducible Gorenstein curve $C$ in general $\text{Ext}^1(\Omega^1_C,\sO_C)$ is different from $\text{Ext}^1(\omega_C,\sO_C)$ and Theorem \ref{corint} is basically the infinitesimal Torelli theorem for the deformations of Gorenstein curves which do not smooth the singularities. 

In the final part of the paper we give an explicit proof that in the case of reducible curves a statement as the one of Theorem \ref{teorema1} can not be true.

\subsection{Acknowledgment} 
This research is supported by MIUR funds, 
PRIN project {\it Geometria delle variet\`a algebriche} (2010), coordinator A. Verra.

The authors would like to thank Edoardo Ballico, Miguel \'Angel Barja, Luca Cesarano and Gian Pietro Pirola for very useful conversations on this topic.

%This theorem gives some information on the deformations of a Gorenstein curve; see Theorem \ref{deformazionigorenstein}.
%\begin{thm}
%Take an infinitesimal deformation  $\xi\in\text{Ext}^1(\Omega^1_C,\sO_C)$ which does not smooth the nodes of $C$. Then $\xi$ can be lifted to an element $\tilde{\xi}$ of $\text{Ext}^1(\omega_C,\sO_C)$. Assume that $C$ is of genus $2$ or non-hyperelliptic of genus $\geq3$. If $\partial_{\tilde{\xi}}=0$ then $\xi=0$.
%\end{thm}

\section{Gorenstein curves}
\label{sezione1}

A \emph{Gorenstein} curve is a projective curve $C$ such that the dualizing sheaf $\omega_C$ is invertible.
Given a Gorenstein curve $C$, we can define the sheaf of K\"ahler differentials $\Omega^1_C$; see for instance \cite[Chapter II Section 8]{H1}. This sheaf is not locally free as in the smooth case and in general it has a torsion part.
On the other hand the \emph{dualizing} sheaf $\omega_C$ is, by definition of Gorenstein curve, a locally free invertible sheaf on the curve $C$.

The sheaves $\Omega^1_C$ and $\omega_C$ are isomorphic outside the singular locus of $C$ and this isomorphism can be completed to a morphism $\rho\colon \Omega^1_C\to \omega_C$, see \cite[Page 244]{BG}.
The morphism $\rho$ fits into the following exact sequence which will be crucial in this paper:

\begin{equation}
\label{sequenzalunga}
0\to K\to \Omega^1_C\stackrel{\rho}{\rightarrow}\omega_C\to N\to 0.
\end{equation} The kernel and the cokernel of $\rho$, denoted by $K$ and $N$ respectively, are torsion sheaves supported on the singularities.

The group ${\rm{Ext}}^1(\Omega^1_C,O_C)$ parametrizes the infinitesimal deformations of $C$ {\it cf.} \cite[Corollary 1.1.11]{S}.
The following result gives some important information on this group:
\begin{prop}
\label{proposizione1}
We have the following exact sequence 
\begin{equation}
\label{localtoglobal1}
0\to H^1(C,\mathcal{H}\textit{om}(\Omega_C^1,\sO_C))\to \text{{\rm Ext}}^1(\Omega^1_C,\sO_C)\stackrel{\mu}{\to} R \to0
\end{equation} where $R$ is a sheaf supported on the singularities of $C$.
\end{prop}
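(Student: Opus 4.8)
The plan is to deduce the sequence from the local-to-global spectral sequence for $\mathrm{Ext}$, applied to the pair of sheaves $(\Omega^1_C,\sO_C)$. Recall that for coherent sheaves $\sF,\sG$ on a scheme $C$ there is a first-quadrant cohomological spectral sequence
\begin{equation}
E_2^{p,q}=H^p\!\left(C,\mathcal{E}xt^q(\sF,\sG)\right)\ \Longrightarrow\ \mathrm{Ext}^{p+q}(\sF,\sG),
\end{equation}
whose edge maps are the natural ones. Taking $\sF=\Omega^1_C$ and $\sG=\sO_C$, the abutment in total degree $1$ is exactly $\mathrm{Ext}^1(\Omega^1_C,\sO_C)$, the group we wish to analyse.

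The first step is to exploit that $C$ is a curve, so $\dim C=1$. By Grothendieck vanishing, $H^p(C,-)=0$ for every coherent sheaf once $p\geq 2$; in particular $E_2^{2,0}=H^2\!\left(C,\sHom(\Omega^1_C,\sO_C)\right)=0$. Feeding this vanishing into the five-term exact sequence
\begin{equation}
0\to E_2^{1,0}\to E^1\to E_2^{0,1}\xrightarrow{\ d_2\ } E_2^{2,0}
\end{equation}
attached to the spectral sequence, the last term drops out and the five-term sequence truncates to the short exact sequence
\begin{equation}
0\to H^1\!\left(C,\sHom(\Omega^1_C,\sO_C)\right)\to \mathrm{Ext}^1(\Omega^1_C,\sO_C)\xrightarrow{\ \mu\ } H^0\!\left(C,\mathcal{E}xt^1(\Omega^1_C,\sO_C)\right)\to 0,
\end{equation}
with $\mu$ the edge homomorphism. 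This already identifies the desired sequence, provided we set $R:=H^0\!\left(C,\mathcal{E}xt^1(\Omega^1_C,\sO_C)\right)$, equivalently $R=\mathcal{E}xt^1(\Omega^1_C,\sO_C)$ itself once we know the latter is concentrated at finitely many points.

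The second step is to identify $R$ and establish its support. On the smooth locus $C_{\mathrm{reg}}$ the sheaf $\Omega^1_C$ agrees with the cotangent line bundle of the smooth curve $C_{\mathrm{reg}}$, hence is locally free there; and for a locally free sheaf $\sF$ one has $\mathcal{E}xt^q(\sF,\sG)=0$ for all $q\geq 1$. Therefore $\mathcal{E}xt^1(\Omega^1_C,\sO_C)$ vanishes on $C_{\mathrm{reg}}$ and is supported on the finite set $\Sing C$; being supported on points, it is a torsion sheaf and its global sections recover the sheaf as a module. This gives the claimed description of $R$. I do not expect a genuine obstacle here: the argument is a clean degeneration of the $\mathrm{Ext}$ spectral sequence in dimension one. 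The only point requiring care is the support statement, i.e. the vanishing of $\mathcal{E}xt^1$ off the singularities, which reduces to the local freeness of $\Omega^1_C$ along $C_{\mathrm{reg}}$; the exact sequence (\ref{sequenzalunga}) and the fact that $K$ and $N$ are torsion sheaves supported on $\Sing C$ corroborate this, since $\rho$ is an isomorphism away from the singular locus.
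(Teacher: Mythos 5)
Your proof is correct, and it is the route the paper explicitly names and then declines to follow: the authors open their proof with the remark that the sequence ``can be obtained by the five term sequence associated to the local to global spectral sequence of Ext's,'' and then give an alternative argument. Your version degenerates the spectral sequence $E_2^{p,q}=H^p(C,\mathcal{E}xt^q(\Omega^1_C,\sO_C))\Rightarrow \mathrm{Ext}^{p+q}(\Omega^1_C,\sO_C)$ using $H^2=0$ on a curve, obtaining $R=H^0(C,\mathcal{E}xt^1(\Omega^1_C,\sO_C))$, supported on $\Sing C$ because $\Omega^1_C$ is locally free on the smooth locus; all of this is sound, including the surjectivity onto $E_2^{0,1}$ (the $d_2$ differential lands in $E_2^{2,0}=0$). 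The paper instead splits the four-term sequence $0\to K\to\Omega^1_C\to\omega_C\to N\to0$ into two short exact sequences, uses local freeness of $\omega_C$ to get $\mathcal{E}xt^1(\hat{\omega},\sO_C)=0$, and then applies $\mathrm{Hom}(-,\sO_C)$ to identify the kernel with $\mathrm{Ext}^1(\hat{\omega},\sO_C)\cong H^1(C,\sHom(\Omega^1_C,\sO_C))$ and the cokernel with $R=\mathrm{Ext}^1(K,\sO_C)$. What your approach buys is brevity and a completely standard mechanism; what the paper's approach buys is the collection of intermediate facts it reuses later --- the vanishing $\mathcal{E}xt^1(\hat{\omega},\sO_C)=0$, the isomorphism (\ref{isomorfismo}) interpreting the kernel as extensions of $\hat{\omega}$ by $\sO_C$ (the starting point of the adjoint theory and of Theorem \ref{deformazionigorenstein}), and the concrete description of $R$ as $\mathrm{Ext}^1(K,\sO_C)$, which in the nodal case yields $\bigoplus_i\mathbb{C}_{P_i}$ and the geometric interpretation of $\mu$ as the smoothing of the nodes. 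Your two descriptions of $R$ agree, since the paper's sequence (\ref{lunga1}) together with $\mathcal{E}xt^1(\hat{\omega},\sO_C)=0$ gives $\mathcal{E}xt^1(\Omega^1_C,\sO_C)\cong\mathcal{E}xt^1(K,\sO_C)$, but if you want the later sections of the paper to go through you would still need to extract those intermediate identifications separately.
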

\begin{proof}
This sequence can be obtained by the five term sequence associated to the local to global spectral sequence of $\text{Ext}$'s. Here we give an alternative proof because we will use some of its steps throughout the paper.

Sequence (\ref{sequenzalunga}) splits into the following short exact sequences:
\begin{equation}
0\to K\to \Omega^1_C\to \hat{\omega}\to0
\label{seq1}
\end{equation} and
\begin{equation}
0\to \hat{\omega}\to \omega_C\to N\to0.
\label{seq2}
\end{equation}
Dualizing (\ref{seq1}) we have 
{\footnotesize
\begin{equation}
\label{lunga1}
0\to \mathcal{H}\textit{om}(\hat{\omega},\mathcal{O}_C)\to \mathcal{H}\textit{om}(\Omega^1_C,\sO_C)\to0\to \mathcal{E}\textit{xt}^1(\hat{\omega},\mathcal{O}_C)\to \mathcal{E}\textit{xt}^1(\Omega^1_C,\sO_C)\to \mathcal{E}\textit{xt}^1(K,\sO_C)\to0
\end{equation}} and in particular we deduce that the dual of the sheaf $\Omega_C^1$ is isomorphic to the dual of the sheaf $\hat{\omega}$.
Taking the dual of (\ref{seq2}) we obtain the long exact sequence 
{\footnotesize
\begin{equation}
0\to \mathcal{H}\textit{om}(\omega_C,\mathcal{O}_C)\to \mathcal{H}\textit{om}(\hat{\omega},\sO_C)\to \mathcal{E}\textit{xt}^1(N,\mathcal{O}_C)\to \mathcal{E}\textit{xt}^1(\omega_C,\sO_C)\to \mathcal{E}\textit{xt}^1(\hat{\omega},\sO_C)\to0.
\end{equation}} Since the dualizing sheaf is locally free we have by \cite[Proposition 6.7]{H1} that $\mathcal{E}\textit{xt}^1(\omega_C,\sO_C)=0$, and we deduce that $\mathcal{E}\textit{xt}^1(\hat{\omega},\sO_C)=0$; see also \cite[Lemma 1.1]{H2}.

Now we apply the functor $\text{Hom}(-,\sO_C)$ to (\ref{seq1}) and we have
{\footnotesize
\begin{equation}
0\to \text{Hom}(\hat{\omega},\mathcal{O}_C)\to \text{Hom}(\Omega^1_C,\sO_C)\to0\to \text{Ext}^1(\hat{\omega},\mathcal{O}_C)\to \text{Ext}^1(\Omega^1_C,\sO_C)\to \text{Ext}^1(K,\sO_C)\to0.
\end{equation}} Then
\begin{equation}
\label{localtoglobal}
0\to \text{Ext}^1(\hat{\omega},\mathcal{O}_C)\to \text{Ext}^1(\Omega^1_C,\sO_C)\to \text{Ext}^1(K,\sO_C)\to0
\end{equation} is exact. 
The fact that the sheaf $K$ is supported on the singularities implies that also $R:=\text{Ext}^1(K,\sO_C)$ is a torsion sheaf. On the other hand, the kernel $\text{Ext}^1(\hat{\omega},\mathcal{O}_C)$ parametrizes the extensions of the sheaf $\hat{\omega}$ by the structure sheaf $\sO_C$, that is isomorphism classes of exact sequences of the form
\begin{equation}
0\to \sO_C\to \mathcal{E}\to \hat{\omega}\to0.
\end{equation} By (\ref{lunga1}) and the fact that the sheaves are reflexive (see \cite[Lemma 1.1]{H2}) we have that
%The dual of this sequence is 
%\begin{equation}
%0\to\mathcal{H}\textit{om}(\hat{\omega},\sO_C)\to\mathcal{H}\textit{om}(\sE,\sO_C)\to \sO_C\to0
%\end{equation} which is again short exact since $\mathcal{E}\textit{xt}^1(\hat{\omega},\sO_C)=0$. By (\ref{lunga1}) this is isomorphic to
%\begin{equation}
%0\to\mathcal{H}\textit{om}(\Omega_C^1,\sO_C)\to\mathcal{H}\textit{om}(\sE,\sO_C)\to \sO_C\to0.
%\end{equation} The image of $1\in H^0(C,\sO_C)$ via the connecting morphism 
%\begin{equation}
%H^0(C,\sO_C)\to H^1(C,\mathcal{H}\textit{om}(\Omega_C^1,\sO_C))
%\end{equation} characterizes the sequence, hence we deduce the isomorphism 
\begin{equation}
\label{isomorfismo}
H^1(C,\mathcal{H}\textit{om}(\Omega_C^1,\sO_C))\cong \text{Ext}^1(\hat{\omega},\mathcal{O}_C).
\end{equation}

Sequence (\ref{localtoglobal}) is then
\begin{equation}
0\to H^1(C,\mathcal{H}\textit{om}(\Omega_C^1,\sO_C))\to \text{Ext}^1(\Omega^1_C,\sO_C)\to R\to0.
\end{equation} 
\end{proof}

\subsection{The nodal case} Nodal curves are an interesting example of Gorenstein curves. 
Recall that a point in a projective curve is a \emph{node} if it has a neighborhood in the analytic topology which is isomorphic to a neighborhood of the origin in the space $(xy=0)\subset\mC^2$. A \emph{nodal curve} is a curve with only nodes as singularities. We denote by $\nu\colon \widetilde{C}\to C$ its normalization.
In this case the situation described above is more explicit. 

Around a node $P$ given locally by $xy=0$, the sheaf $\Omega^1_C$ is generated by $dx$ and $dy$ with the relation $ydx+xdy=0$; see \cite[Chapter X]{ACG}. 

On the other hand the dualizing sheaf is defined as follows. Consider $P_1,\ldots,P_n$ the nodes of the curve and $Q_1,Q'_1,\ldots,Q_n,Q'_n$ their preimages in the normalization $\widetilde{C}$ of $C$. Then $\omega_C$ is the subsheaf of
\begin{equation}
\nu_*(\omega_{\widetilde{C}}(\sum{Q_i+Q'_i}))
\end{equation} given by the sections $\sigma$ with opposite residues in $Q_i$ and $Q_i'$, that is
\begin{equation}
\text{Res}_{Q_i}(\sigma)+\text{Res}_{Q_i'}(\sigma)=0.
\end{equation} A local generator for $\omega_C$ in a neighborhood of a node is by adjunction $\frac{dx\wedge dy}{F}$, where $F$ is a local equation for the curve.

Locally near a node $P$, $\rho$ is given by 
\begin{equation}
\rho(dx)=x\frac{dx\wedge dy}{F}
\end{equation}and
\begin{equation}
\rho(dy)=y\frac{dx\wedge dy}{F}.
\end{equation}

 The stalk $K_P$ of the kernel of (\ref{sequenzalunga}) is the $\mathbb{C}$-vector space generated by $xdy=-ydx$. To understand the cokernel $N_P$, we note that the image of $\rho$, denoted by $\hat{\omega}$, is generated by the ideal $(x,y)$ in $\omega$, that is $\hat{\omega}=(x,y)\cdot\omega$. $N_P$ is then $\omega/(x,y)\cdot\omega$.

By the explicit description of $K$ in the nodal case it holds that $R=\bigoplus_i\mathbb{C}_{P_i} $ and (\ref{localtoglobal1}) is 
\begin{equation}
\label{localtoglobal2}
0\to H^1(C,\mathcal{H}\textit{om}(\Omega_C^1,\sO_C))\to \text{Ext}^1(\Omega^1_C,\sO_C)\to\bigoplus_i\mathbb{C}_{P_i} \to0;
\end{equation}see for instance \cite[Chapter XI]{ACG}.

The meaning of this sequence is that there are two kinds of infinitesimal deformations of $C$. The deformations corresponding to the cokernel $\bigoplus_i\mathbb{C}_{P_i}$ are those that give the smoothing of the nodes. More precisely the generator of $\mathbb{C}_{P_i}$ corresponds to the infinitesimal deformation given by $xy=\epsilon$ around the node $P_i$ glued together with the trivial deformation outside a neighborhood of $P_i$.

On the other hand the deformations coming from the kernel $H^1(C,\mathcal{H}\textit{om}(\Omega_C^1,\sO_C))$ are locally trivial around the nodes. They can also be seen as deformations of the pointed curve $(\widetilde{C}, Q_1,Q_1',\ldots,Q_n,Q_n')$. This comes from the fact that 
\begin{equation}
\label{isonorm}
\mathcal{H}\textit{om}(\Omega_C^1,\sO_C)\cong\nu_*(\Theta_{\widetilde{C}}(-\sum (Q_i+Q_i'))).
\end{equation}

Consider $\xi \in \text{Ext}^1(\Omega^1_C,\sO_C)$ corresponding to the exact sequence
\begin{equation}
0\to\sO_C\to \Omega^1_{S}|_C\to \Omega^1_C\to0.
\end{equation} If $\xi$ is a generator in the cokernel of (\ref{localtoglobal2}), then we note that the sheaf $\Omega^1_{S}|_C$ is locally free in a neighborhood of the node which is smoothed by the deformation. More precisely this sheaf is generated by the sections $dx$ and $dy$ with no relations. The section $1$ of the structure sheaf $\sO_C$ is sent locally to the $1$-form $xdy+ydx$ of $\Omega^1_{S}|_C$, so that in $\Omega^1_C$ we have the relation $xdy+ydx=0$.

We want now to give a Torelli-type theorem for the deformations coming from the kernel of (\ref{localtoglobal1}). For this purpose we introduce the adjoint theory in the case of a Gorenstein curve.

\section{Adjoint theory for irreducible Gorenstein curves}

Let $\sF$ and $\sL$ be two locally free sheaves of rank one on $C$. Consider the exact sequence of locally free sheaves
\begin{equation}
\label{sequenza}
0\to\sL\to \sE\to\sF\to 0
\end{equation} associated to an element $\xi\in \text{Ext}^1(\sF,\sL)$. Since $\sL$ is locally free of rank one we can tensor this sequence by $\sL^\vee$ and take the dual. Hence we obtain the exact sequence
\begin{equation}
0\to\sL\otimes\sF^\vee\to\sL\otimes \sE^\vee\to\sO_C\to 0.
\end{equation} As in the smooth case the image of $1\in H^0(C,\sO_C)$ via the morphism 
\begin{equation}
H^0(C,\sO_C)\to H^1(C,\sL\otimes\sF^\vee)
\end{equation} characterizes the extension and gives an isomorphism $\text{Ext}^1(\sF,\sL)\cong H^1(C,\sL\otimes\sF^\vee)$.

Let $\partial_\xi \colon H^0(C,\sF)\to H^1(C,\sL)$ be the connecting homomorphism of (\ref{sequenza}), and let $W\subset \ker(\partial_\xi)$ be a vector subspace of dimension $2$. Choose a basis $\mathcal{B}:=\{\eta_1,\eta_{2}\}$ of $W$. By definition we can take liftings $s_1,s_{2}\in H^0(C,\sE)$ of the sections $\eta_1,\eta_{2}$. 

Consider the base locus of $W$. It consists of a finite number of smooth points and singularities. Denote by $D_W$ the Cartier divisor associated to the smooth points in the base locus, and by $\sI_W$ the ideal of the singular points contained in the base locus. The ideal $\sI_W$ is an \emph{effective generalized divisor} on the curve $C$. 

The theory of generalized divisor on Gorenstein curves can be found in \cite{H2}. Here we briefly recall that a \emph{generalized divisor} on $C$ is a nonzero subsheaf of the constant sheaf of the function field $\sK$ which is also a coherent $\sO_C$-module. A generalized divisor is \emph{effective} if it is a nonzero ideal of $\sO_C$, that is if it corresponds to a $0$-dimensional closed subscheme of $C$. The inverse of the generalized divisor $\sI$ is locally given by $\sI^{-1}:=\{f\in\sK\mid f\cdot\sI\subset\sO_C\}$.

The sections $\eta_1$ and $\eta_2$ generate the sheaf $\sF(-D_W)\otimes \sI_W$ and by local computation we have the following short exact sequence
\begin{equation}
\label{sequenzaker}
0\to\sF^\vee(D_W)\otimes \sI_W^{-1}\to W\otimes\sO_C\to \sF(-D_W)\otimes \sI_W\to0
\end{equation} which fits into the  diagram
\begin{equation}
\label{diagramma0}
\xymatrix {0 \ar[r] &\sF^\vee(D_W)\otimes \sI_W^{-1} \ar[r] &W\otimes\sO_C \ar[r]\ar[d]^{(s_1,s_2)} &  \sF(-D_W)\otimes \sI_W\ar[r]\ar[d]&0\\
0 \ar[r] & \sL \ar[r] & \sE\ar[r] & \mathcal{F} \ar[r]&0.
}
\end{equation} We can complete the diagram with a morphism $\omega\colon \sF^\vee(D_W)\otimes \sI_W^{-1}\to\sL$, that is $$\omega\in \text{Hom}(\sF^\vee(D_W)\otimes \sI_W^{-1},\sL).$$ The morphism $\omega$ depends on the choice of the liftings $s_1, s_2$.

\begin{defn}
The section $\omega\in \text{Hom}(\sF^\vee(D_W)\otimes \sI_W^{-1},\sL)$ is called an adjoint of $W$ and $\xi$.
\end{defn}

We want to study the condition $\omega\in \Ima \Phi_{\sB}$ where
\begin{equation}
\label{aggiuntazero0}
 \Phi_{\sB}\colon \text{Hom}(W\otimes \sO_C,\sL)\to \text{Hom}(\sF^\vee(D_W)\otimes \sI_W^{-1},\sL).
\end{equation}
\begin{rmk}
The adjoint $\omega$ depends on the choice of the liftings $s_1, s_2$, whereas the above condition does not.
To be more precise if we change liftings $s'_1, s'_2$ and construct the corresponding adjoint $\omega'$. We have that $\omega\neq\omega'$ in general, but $\omega-\omega' \in\Ima \Phi_{\sB}$.
 %if and only if $\omega' \in \Ima(\text{Hom}(W\otimes \sO_C,\sL)\to \text{Hom}(\sF^\vee(D_W)\otimes \sI_W^{-1},\sL))$.
\end{rmk}
\begin{rmk}
Consider another basis $\mathcal{B}':=\{\eta_1',\eta_{2}'\}$ of $W$ and let $A$ be the matrix of the basis change. 
The sections $s_1',s_{2}'$ obtained from $s_1,s_{2}$ through the matrix $A$ are liftings of $\eta_1',\eta_{2}'$. It is easy to see composing with $A$ that $\omega \in \Ima \Phi_{\sB}$ if and only if $\omega' \in \Ima \Phi_{\sB'}$ where $\omega'$ is the adjoint constructed from $\eta_1',\eta_{2}'$.
\end{rmk}

\begin{thm}
\label{aggiunta0}
Let $C$ be an irreducible Gorenstein curve with $n$ singular points. Let $\sF$, $\sL$ be invertible sheaves on $C$. Consider $\xi\in \text{{\rm Ext}}^1(\sF,\sL)$ associated to the extension (\ref{sequenza}). Define $W=\left\langle \eta_1,\eta_{2}\right\rangle\subset\ker({\partial_\xi})\subset H^0(C,\mathcal{F})$ and $\omega$ as above. Call $\xi_{D_W}$ the image of $\xi$ via the morphism
\begin{equation}
\text{{\rm Ext}}^1(\sF,\sL)\stackrel{\rho}{\rightarrow} \text{{\rm Ext}}^1(\sF(-D_W)\otimes\sI_W,\sL).
\end{equation}
We have that $\omega\in\Ima(\text{{\rm Hom}}(W\otimes \sO_C,\sL)\to \text{{\rm Hom}}(\sF^\vee(D_W)\otimes \sI_W^{-1},\sL))$ if and only if $\xi_{D_W}=0$.

\end{thm}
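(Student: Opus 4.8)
The plan is to reduce the statement to the standard homological algebra of extension classes applied to the top row of diagram (\ref{diagramma0}). Write $A=\sF^\vee(D_W)\otimes\sI_W^{-1}$, $B=W\otimes\sO_C$ and $C'=\sF(-D_W)\otimes\sI_W$, so that the top row (\ref{sequenzaker}) is a short exact sequence $0\to A\to B\to C'\to0$ representing a class $\zeta\in\text{Ext}^1(C',A)$, and let $j\colon C'\to\sF$ be the natural inclusion (multiplication by the section cutting out the base locus). By definition the map denoted $\rho$ in the statement is the pullback along $j$, so $\xi_{D_W}=j^\ast\xi\in\text{Ext}^1(C',\sL)$.

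First I would observe that, once liftings $s_1,s_2$ are fixed, diagram (\ref{diagramma0}) is precisely a morphism of short exact sequences from the top row to the bottom row (\ref{sequenza}), covering $\omega\colon A\to\sL$ on the subobjects, the middle map $(s_1,s_2)$, and $j$ on the quotients. The basic criterion for the existence of such a morphism of extensions is that the pushforward of the top class along $\omega$ agrees with the pullback of the bottom class along $j$; concretely
\begin{equation}
\label{keyident}
\omega_\ast(\zeta)=j^\ast(\xi)=\xi_{D_W}\qquad\text{in }\text{Ext}^1(C',\sL).
\end{equation}
The middle map $(s_1,s_2)$ is exactly a witness for this identity: factoring it through the pullback $\sE\times_\sF C'$ identifies the pushout extension $\omega_\ast(\zeta)$ with the pulled-back extension $j^\ast(\xi)$. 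This is the key link between the adjoint $\omega$ and the class $\xi_{D_W}$.

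Next I would apply the functor $\text{Hom}(-,\sL)$ to the top row (\ref{sequenzaker}) and read off the relevant piece of the long exact sequence
\begin{equation}
\text{Hom}(B,\sL)\xrightarrow{\ \Phi_{\sB}\ }\text{Hom}(A,\sL)\xrightarrow{\ \delta\ }\text{Ext}^1(C',\sL),
\end{equation}
where the restriction map along the inclusion $A\hookrightarrow B$ is exactly the map $\Phi_{\sB}$ of (\ref{aggiuntazero0}). By the standard description of the Yoneda boundary, the connecting homomorphism $\delta$ is given by pushforward of the extension class, $\delta(\psi)=\psi_\ast(\zeta)$ for every $\psi\in\text{Hom}(A,\sL)$; in particular $\delta(\omega)=\omega_\ast(\zeta)=\xi_{D_W}$ by (\ref{keyident}). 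Exactness then gives $\Ima\Phi_{\sB}=\Ker\delta$, so $\omega\in\Ima\Phi_{\sB}$ if and only if $\delta(\omega)=0$, i.e. if and only if $\xi_{D_W}=0$. This also re-confirms the earlier remark that membership $\omega\in\Ima\Phi_{\sB}$ is independent of the chosen liftings, since $\xi_{D_W}$ is.

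I expect the main obstacle to be the careful verification of the identity (\ref{keyident}), namely checking that diagram (\ref{diagramma0}) genuinely realizes the morphism-of-extensions criterion at the level of $\text{Ext}^1(C',\sL)$; this requires factoring the middle vertical map through the pullback $\sE\times_\sF C'$ and matching the two resulting extensions of $C'$ by $\sL$. The remaining ingredients — the long exact $\text{Hom}$–$\text{Ext}$ sequence and the identification of $\delta$ with Yoneda pushforward — are formal and valid in the category of coherent $\sO_C$-modules, so they require no special argument beyond citing the general theory.
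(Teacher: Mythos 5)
Your proposal is correct and takes essentially the same route as the paper: both apply $\Hom(-,\sL)$ to diagram (\ref{diagramma0}) and conclude from exactness of $\Hom(W\otimes\sO_C,\sL)\xrightarrow{\Phi_{\sB}}\Hom(\sF^\vee(D_W)\otimes\sI_W^{-1},\sL)\xrightarrow{\delta}\text{Ext}^1(\sF(-D_W)\otimes\sI_W,\sL)$ once $\delta(\omega)=\xi_{D_W}$ is known. The paper obtains that identity by naturality of the connecting homomorphism applied to $\mathrm{id}_{\sL}$ in the ladder of long exact sequences, which is the same formal fact as your Yoneda identity $\omega_{*}(\zeta)=j^{*}(\xi)$ combined with $\delta(\psi)=\psi_{*}(\zeta)$.
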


\begin{proof} Set $\sB:=\{\eta_1,\eta_2\}$.
Take diagram (\ref{diagramma0}) and apply the functor $\text{Hom}(-,\sL)$. We obtain
{%\footnotesize
\begin{equation}
\label{diagramma10}
\xymatrix {\text{Hom}(\sE,\sL) \ar[r]\ar[d]& \text{Hom}(\sL,\sL) \ar[r]\ar[d]^{\beta}& \text{Ext}^1(\sF,\sL)\ar[d]^\rho\\
 \text{Hom}(W\otimes \sO_C,\sL)\ar[r]^-{\Phi_{\sB}} & \text{Hom}(\sF^\vee(D_W)\otimes \sI_W^{-1},\sL) \ar[r]^-{\delta}&\text{Ext}^1(\sF(-D_W)\otimes\sI_W,\sL).
}
\end{equation}}

Obviously $\beta(id)=\omega$ and, by commutativity, $\delta(\beta(id))=\xi_{D_W}$. We have then $\xi_{D_W}=0$ if and only if $\omega\in  \Ima \Phi_{\sB}$.
\end{proof}

\subsection{Non singular base locus}
If the base locus of $\eta_1$ and $\eta_2$ does not contain singular points, then the situation is easier and it can be more explicitly described as follows.

Consider again the liftings $s_1,s_2\in H^0(C,\sE)$.
The natural map
\begin{equation*}
\Lambda^2\colon\bigwedge^{2}H^0(C,\sE)\to H^0(C,\bigwedge^2\sE)
\end{equation*} defines the section
\begin{equation}
\label{omega}
\omega:=\Lambda^2(s_1\wedge s_{2}).
\end{equation}

It is easy to see that $\omega$ is in the image of the natural injection $\det\sE(-D_W)\to \det\sE$.

Call $\tilde{\eta}_i	\in H^0(C,\sF(-D_W))$ the sections corresponding to the $\eta_i$'s via $H^0(C,\sF(-D_W))\to H^0(C,\sF)$.
Since our sheaves are locally free and the base locus does not contain the singular points, sequence (\ref{sequenzaker}) is now given by
\begin{equation}
\label{castelnuovo}
0\to\mathcal{F}^\vee(D_W)\stackrel{i}{\rightarrow}\mathcal{O}_C\oplus\mathcal{O}_C\stackrel{\nu}{\rightarrow}\mathcal{F}(-D_W)\to 0.
\end{equation} This is basically the well known Castelnuovo's base point free pencil trick and the morphism $i$ is given by the contraction with $-\tilde{\eta}_1$ and $\tilde{\eta}_2$, while $\nu$ is given by the evaluation with $\tilde{\eta}_2$ on the first component and $\tilde{\eta}_1$ on the second one.

%\begin{proof}
%Everything is clear since $\sF$ is locally free of rank one and the sections $\tilde{\eta}_1,\tilde{\eta}_2$ generate the sheaf $\sF(-D_W)$. 
%\end{proof} This sequence is the analog of (\ref{sequenzaker}) in this case.

It is easy to see by local computation that sequence (\ref{castelnuovo}) fits into the following commutative diagram
\begin{equation}
\label{diagramma}
\xymatrix {0 \ar[r] &\sF^\vee(D_W) \ar[r]\ar[d]^-{\omega} &W\otimes\sO_C \ar[r]\ar[d]^{(s_1,s_2)} &  \sF(-D_W)\ar[r]\ar[d]&0\\
0 \ar[r] & \sL \ar[r] & \sE\ar[r] & \mathcal{F} \ar[r]&0.
}
\end{equation}
The morphism $W\otimes\sO_C\to\sE$ is given by the contraction with the sections $s_1$ and $s_2$, the morphism $\mathcal{F}^\vee(D_W)\to\sL$ by the contraction with the adjoint $\omega$.
Therefore condition (\ref{aggiuntazero0}) can be written as
\begin{equation}
\label{aggiuntazero1}
\omega \in \Ima(H^0(C,\sL)\otimes \left\langle s_1,s_2\right\rangle\to H^0(C,\det\sE))
\end{equation} or, equivalently,
\begin{equation}
\label{aggiuntazero2}
\omega \in \Ima(H^0(C,\sL)\otimes W\to H^0(C,\det\sE)).
\end{equation} The first map is given by the wedge product, the second one by the fact that $\det\sE\cong\sL\otimes\sF$. Note that if $H^0(C,\sL)=0$ this condition is equivalent to $\omega=0$.
\begin{rmk}
\label{casobase}
If $\sL=\sO_C$, then $\det\sE\cong\sF$ and we can (and will) see $\omega$ as an element of $H^0(C,\sF)$. Condition (\ref{aggiuntazero2}) is easily written as $\omega\in W$.
\end{rmk}

From the natural map
\begin{equation}
\sF^\vee\otimes\sL\to\sF^\vee\otimes\sL(D_W)
\end{equation} we have a homomorphism
\begin{equation}
H^1(C,\sF^\vee\otimes\sL)\stackrel{\rho}{\rightarrow} H^1(C,\sF^\vee\otimes\sL(D_W));
\end{equation} in analogy with Theorem \ref{aggiunta0} we call $\xi_{D_W}:=\rho(\xi)$.

In the case where there are no singularities in the base locus, Theorem \ref{aggiunta0} is formulated in the following easier way:
\begin{cor}
\label{aggiunta}
Let $C$ be an irreducible Gorenstein curve with $n$ nodes. Let $\sF$, $\sL$ be invertible sheaves on $C$. Consider $\xi\in H^1(C,\mathcal{F}^\vee\otimes\sL)$ associated to the extension (\ref{sequenza}). Define $W=\left\langle \eta_1,\eta_{2}\right\rangle\subset\ker({\partial_\xi})\subset H^0(C,\mathcal{F})$ and $\omega$ as above. Assume also that the base locus of $W$ does not contain singularities.
We have that $\omega \in \Ima(H^0(C,\sL)\otimes W\to H^0(C,\det\sE))$ if and only if $\xi_{D_W}=0$.
\end{cor}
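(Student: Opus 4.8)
\emph{Proof proposal.} The plan is to deduce the statement directly from Theorem \ref{aggiunta0} by specializing to the case $\sI_W=\sO_C$ and then translating the two conditions appearing there into the cohomological language of the corollary. First I would observe that, since the base locus of $W$ contains no singular points, the effective generalized divisor $\sI_W$ is trivial, so that $\sF^\vee(D_W)\otimes\sI_W^{-1}=\sF^\vee(D_W)$ and $\sF(-D_W)\otimes\sI_W=\sF(-D_W)$. Hence the short exact sequence (\ref{sequenzaker}) reduces to the base-point-free pencil trick sequence (\ref{castelnuovo}), and the defining diagram (\ref{diagramma0}) becomes the diagram (\ref{diagramma}). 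In particular the adjoint produced abstractly in Theorem \ref{aggiunta0} coincides with the section $\omega=\Lambda^2(s_1\wedge s_2)$ of $\det\sE$, viewed through the injection $\det\sE(-D_W)\hookrightarrow\det\sE$.

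Next I would identify the relevant $\text{Ext}$-groups with cohomology groups. Tensoring (\ref{sequenza}) by $\sL^\vee$ and dualizing, exactly as in the paragraph preceding the corollary, yields $\text{Ext}^1(\sF,\sL)\cong H^1(C,\sF^\vee\otimes\sL)$; applying the same computation to $\sF(-D_W)$ gives $\text{Ext}^1(\sF(-D_W),\sL)\cong H^1(C,\sF^\vee\otimes\sL(D_W))$, since $(\sF(-D_W))^\vee=\sF^\vee(D_W)$. These isomorphisms are natural in the first variable, so the map $\rho$ on $\text{Ext}$-groups in Theorem \ref{aggiunta0} corresponds under them to the map $\rho$ on $H^1$ induced by the inclusion $\sF^\vee\otimes\sL\to\sF^\vee\otimes\sL(D_W)$. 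Consequently the vanishing $\xi_{D_W}=0$ in the $\text{Ext}$-formulation of Theorem \ref{aggiunta0} is equivalent to $\rho(\xi)=0$, which is precisely $\xi_{D_W}=0$ in the cohomological sense fixed just before the statement of the corollary.

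It then remains to translate the membership $\omega\in\Ima\Phi_{\sB}$ into condition (\ref{aggiuntazero2}). Using again $(\sF^\vee(D_W))^\vee=\sF(-D_W)$ together with $\det\sE\cong\sL\otimes\sF$, I would identify $\text{Hom}(\sF^\vee(D_W),\sL)\cong H^0(C,\sL\otimes\sF(-D_W))=H^0(C,\det\sE(-D_W))$, under which $\omega$ corresponds to $\Lambda^2(s_1\wedge s_2)$. An element of $\text{Hom}(W\otimes\sO_C,\sL)$ is a pair of sections of $\sL$, and precomposing with the inclusion $i$ of (\ref{castelnuovo}) — contraction with $-\tilde{\eta}_1,\tilde{\eta}_2$ — shows that the image of $\Phi_{\sB}$ is spanned by $\tilde{\eta}_1\cdot H^0(C,\sL)$ and $\tilde{\eta}_2\cdot H^0(C,\sL)$, i.e. it is exactly $\Ima(H^0(C,\sL)\otimes W\to H^0(C,\det\sE))$, the map being wedge product followed by $\det\sE\cong\sL\otimes\sF$. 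Thus $\omega\in\Ima\Phi_{\sB}$ if and only if $\omega\in\Ima(H^0(C,\sL)\otimes W\to H^0(C,\det\sE))$, and combined with the previous paragraph this is Theorem \ref{aggiunta0} rephrased.

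The routine but genuinely delicate step, which I would carry out by a local computation on an affine cover, is this last one: checking that the explicit maps $i$ and $\nu$ of the pencil trick agree with the vertical arrows of diagram (\ref{diagramma}), and that under the $\text{Hom}$--$H^0$ identification the abstract map $\Phi_{\sB}$ becomes the wedge-product map, keeping careful track of signs and of the twist by $D_W$. Everything else is a formal consequence of Theorem \ref{aggiunta0} together with the standard duality $\text{Ext}^1(\sF,\sL)\cong H^1(C,\sF^\vee\otimes\sL)$ for invertible sheaves.
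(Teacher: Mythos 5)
Your proposal is correct and follows essentially the same route as the paper, which states Corollary \ref{aggiunta} as an immediate specialization of Theorem \ref{aggiunta0}: the identifications you spell out (that $\sI_W=\sO_C$ reduces (\ref{diagramma0}) to (\ref{diagramma}), and that $\Ima\Phi_{\sB}$ becomes $\Ima(H^0(C,\sL)\otimes W\to H^0(C,\det\sE))$ under $\Hom(\sF^\vee(D_W),\sL)\cong H^0(C,\det\sE(-D_W))$) are exactly those the paper sets up in the ``Non singular base locus'' subsection, in particular the passage from condition (\ref{aggiuntazero0}) to (\ref{aggiuntazero2}). Your explicit attention to the naturality of $\text{Ext}^1(\sF,\sL)\cong H^1(C,\sF^\vee\otimes\sL)$ and to the local verification of the vertical arrows is a reasonable elaboration of what the paper leaves implicit.
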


\begin{rmk}
If $D_W=0$, then under the same hypothesis we have $\omega \in \Ima(H^0(C,\sL)\otimes W\to H^0(C,\det\sE))$ if and only if $\xi=0$.
\end{rmk}

\section{Torelli-type theorem}
\subsection{Main Theorem}
We want to study the extensions of the dualizing sheaf $\omega_C$ of a Gorenstein curve $C$. We need a version of the Castelnuovo theorem in the case of Gorenstein curves. 
\begin{thm}
\label{castelnuovo1}
Let $C$ be an irreducible Gorenstein curve with $n$ singular points, $|D|$ a base point free linear system of dimension $r\geq3$ and assume that the map 
\begin{equation}
\phi_D\colon C\to \mathbb{P}^r
\end{equation} is birational onto the image.
Then the natural map
\begin{equation}
\text{Sym}^lH^0(C,\sO(D))\otimes H^0(C,\omega_C)\to H^0(C,\omega_C(lD))
\end{equation} is surjective for $l\geq0$.
\end{thm}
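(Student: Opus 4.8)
The plan is to induct on $l$. Since the $l=0$ map is the identity of $H^0(C,\omega_C)$, it suffices, assuming surjectivity at level $l$, to show that the one-step multiplication
\[
\mu_l\colon H^0(C,\sO(D))\otimes H^0(C,\omega_C(lD))\to H^0(C,\omega_C((l+1)D))
\]
is surjective: the level-$l$ surjection, tensored with $H^0(\sO(D))$ and followed by $\mu_l$, factors through $\mathrm{Sym}^{l+1}H^0(\sO(D))\otimes H^0(\omega_C)$ and is onto. For $\mu_l$ I would use the base point free pencil trick already recorded in (\ref{castelnuovo}). Because $r\geq3$, a general linear $\mathbb{P}^{r-2}\subset\mathbb{P}^r$ misses the curve $\phi_D(C)$, so a general pencil $W=\langle s_0,s_1\rangle\subset H^0(\sO(D))$ is base point free and (\ref{castelnuovo}) holds with $D_W=0$ and $\sF=\sO(D)$. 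Tensoring it by $\omega_C(lD)$ produces
\[
0\to\omega_C((l-1)D)\to\omega_C(lD)^{\oplus2}\to\omega_C((l+1)D)\to0,
\]
so the cokernel of $\mu_l$ injects into $H^1(C,\omega_C((l-1)D))$. Since $\omega_C$ is the dualizing sheaf, Serre duality identifies this with $H^0(C,\sO((1-l)D))^\vee$, which vanishes for $l\geq2$ as $\deg D\geq r>0$. Hence $\mu_l$ is surjective for all $l\geq2$.

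This leaves the base cases $\mu_0$ and $\mu_1$, where the obstructions $H^1(\omega_C(-D))\cong H^0(\sO(D))^\vee$ and $H^1(\omega_C)\cong\mathbb{C}$ do not vanish; these I would treat by restricting to a general hyperplane section. Let $Z\in|D|$ be the divisor cut by a general hyperplane $H\subset\mathbb{P}^r$. As $\Sing C$ is finite, a general $H$ avoids $\phi_D(\Sing C)$, so $Z$ consists of $d=\deg D$ distinct points of the smooth locus; and since $\phi_D$ is birational onto an integral nondegenerate curve, the General Position Theorem (valid in characteristic zero) places $\phi_D(Z)$ in general position in $H\cong\mathbb{P}^{r-1}$. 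Multiplication by the equation $s_Z$ of $Z$ gives
\[
0\to\omega_C(lD)\xrightarrow{\ s_Z\ }\omega_C((l+1)D)\to\omega_C((l+1)D)|_Z\to0,
\]
and the kernel of restriction to $Z$ equals $s_Z\cdot H^0(\omega_C(lD))\subset\Ima\mu_l$. Consequently $\mu_l$ is surjective precisely when the restricted products $\{x|_Z\cdot\sigma|_Z\}$, with $x\in H^0(\sO(D))$ and $\sigma$ in the level-$l$ image $H^0(\omega_C(lD))$, span the image of $H^0(\omega_C((l+1)D))$ in $\omega_C((l+1)D)|_Z\cong\mathbb{C}^d$, a subspace whose dimension is prescribed by Riemann--Roch. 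Unwinding the level-$l$ surjectivity, these products are restrictions of (degree-$(l+1)$ forms)$\,\cdot\,$(canonical sections).

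I expect this concluding span statement to be the crux of the proof. The restricted linear forms $\{x|_Z\}$ fill an $r$-dimensional subspace of $\mathbb{C}^d$ determined by the general position of $\phi_D(Z)$, the restricted canonical sections form the image of $H^0(\omega_C)\to\omega_C|_Z$, and one must verify that their products exhaust the Riemann--Roch image above; this is exactly the classical Castelnuovo computation, where the uniform position of the points and the values $h^0(\omega_C(mD))=g-1+md$ ($m\geq1$) enter. Its passage to the Gorenstein case is unproblematic precisely because the generic $Z$ lies in the smooth locus, so the points behave as on a smooth curve, while Serre duality and the displayed exact sequences are legitimate since $\omega_C$ is invertible. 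The singularities of $C$ intervene only through the generalized-divisor formalism of Section \ref{sezione1}, which justifies working with these sheaves but plays no further role once $Z$ is chosen general.
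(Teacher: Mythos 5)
Your reduction is the one the paper intends: its entire proof consists of the observation that a general member of $|D|$ avoids the finitely many singular points (so the relevant divisors are Cartier and the general-position machinery applies verbatim) followed by a citation of the smooth case in ACGH, p.~151. Within that framework your treatment of $\mu_l$ for $l\geq 2$ is correct and complete: a general pencil in $H^0(C,\sO(D))$ is base point free because a general $\mathbb{P}^{r-2}$ misses the curve, the pencil trick tensored with $\omega_C(lD)$ applies, and $H^1(C,\omega_C((l-1)D))\cong H^0(C,\sO((1-l)D))^{\vee}=0$ on an integral curve once $l\geq 2$ (the cokernel of $\mu_l$ is a quotient, not a subspace, of this $H^1$, but that does not affect the conclusion).

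The gap is in the cases $l=1,2$ of the statement, i.e.\ the surjectivity of $\mu_0$ and $\mu_1$: this is where the entire content of Castelnuovo's theorem lives, and you do not prove it. You reduce it to the assertion that the products $x|_Z\cdot\sigma|_Z$ span the image of $H^0(C,\omega_C((l+1)D))$ in $\mathbb{C}^d$ and then label that assertion ``the classical Castelnuovo computation''; but that span statement (an $r$-dimensional space of restricted linear forms times a $(d-r)$-dimensional space of restricted canonical sections filling a $(d-1)$-dimensional target) is a genuine general-position argument, not a formality, and restricting to the full hyperplane section $Z$ is not the easiest way to organize it. The standard route --- the one the paper itself runs explicitly inside the proof of Theorem \ref{teorema1} --- uses subpencils: for $E=p_1+\cdots+p_{r-1}$ a general divisor of degree $r-1$ supported on a general hyperplane section, the General Position Theorem gives $h^0(C,\sO(D-E))=2$ and that $|D-E|$ is base point free; the pencil trick for this subpencil tensored with $\omega_C(lD)$ exhibits the cokernel of
\begin{equation*}
H^0(C,\sO(D-E))\otimes H^0(C,\omega_C(lD))\longrightarrow H^0(C,\omega_C((l+1)D-E))
\end{equation*}
inside $H^1(C,\omega_C((l-1)D+E))\cong H^0(C,\sO((1-l)D-E))^{\vee}$, which vanishes already for $l=1$, while for $l=0$ surjectivity follows from a Riemann--Roch dimension count that uses $h^0(C,\sO(D-E))=2$ exactly. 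Letting $E$ vary, the subspaces $H^0(C,\omega_C((l+1)D-E))\subset\Ima\mu_l$ span $H^0(C,\omega_C((l+1)D))$, which closes the argument. With that substitution in place of your hyperplane-section sketch, your proof becomes complete and coincides with the argument the paper is pointing to.
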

Since $|D|$ is base point free then its general member does not contain singular points and $D$ is a Cartier divisor. Hence the proof is very similar to the one in the case of smooth curves; see \cite[Page 151]{ACGH}.

We can prove the following theorem. The version for smooth curves can be found in \cite{Ce}.
\begin{thm}
\label{teorema1}
Let $C$ be an irreducible Gorenstein curve as above, and let $\sL$ be a locally free sheaf of rank one on $C$. Consider the extension
\begin{equation}
0\to\sO_C\to\sE\to\sL\to0
\label{estensione}
\end{equation} given by an element $\xi\in \text{{\rm Ext}}^1(\sL,\sO_C)\cong H^1(C,\sL^\vee)$. Call $F$ the fixed part of the linear system associated to $\sL$ and assume that $F$ does not contain singularities; call $M$ its mobile part. Assume that the map $\phi_M$ given by $M$ is of degree one and that $l:=\dim |M|\geq3$. If the cohomology map
\begin{equation}
H^0(C,\sE)\to H^0(C,\sL)
\label{partial}
\end{equation} is surjective, then $\xi_F=0$.
\end{thm}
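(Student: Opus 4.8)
The plan is to run the adjoint-forms machinery of the previous section in the special case where its ``$\sL$'' is $\sO_C$ and its ``$\sF$'' is our $\sL$, and then to descend along the twists. First I would record that the surjectivity of (\ref{partial}), $H^0(C,\sE)\to H^0(C,\sL)$, is equivalent—via the long exact cohomology sequence of (\ref{estensione})—to the vanishing of the connecting map $\partial_\xi\colon H^0(C,\sL)\to H^1(C,\sO_C)$. Hence $H^0(C,\sL)=\ker(\partial_\xi)$ and \emph{every} $2$-dimensional subspace $W\subset H^0(C,\sL)$ is admissible in the adjoint construction. Since here the lower sheaf of the extension is $\sO_C$, Remark \ref{casobase} identifies $\det\sE\cong\sL$ and lets me regard the adjoint $\omega=\omega_W$ as a section of $\sL$; moreover condition (\ref{aggiuntazero2}) collapses to the clean requirement $\omega_W\in W$.

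Next I would split $\sL=M\otimes\sO_C(F)$, so that $H^0(C,\sL)\cong H^0(C,M)$, the mobile system $|M|$ is base point free, and $\phi_M$ is birational with $\dim|M|=l\ge 3$. For a general $2$-dimensional $W$ the base locus is $D_W=F+B_W$, where $B_W$ is the (reduced, singularity-free, \emph{moving}) base locus of the corresponding pencil in $|M|$; concretely $B_W=\phi_M^{-1}(\Lambda_W)$ for a general codimension-$2$ linear subspace $\Lambda_W\subset\mP^l$, so that by generic position and $\deg\phi_M=1$ one has that $B_W$ consists of $\deg M$ distinct smooth points. Because $F$ contains no singularities, $D_W$ contains none either, so Corollary \ref{aggiunta} (rather than only its generalized-divisor version, Theorem \ref{aggiunta0}) applies and gives the dictionary: for such $W$, one has $\xi_{D_W}=0$ in $H^1(C,\sL^\vee(D_W))$ if and only if $\omega_W\in W$.

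The core of the argument is to prove $\omega_W\in W$ for the general pencil $W$, and this is exactly where the hypotheses $\deg\phi_M=1$ and $l\ge 3$ must be used, since a priori $\omega_W$ lies only in the possibly larger space $H^0(C,M(-B_W))$. Here I would pass to the Serre-dual picture on the Gorenstein curve: duality with the dualizing sheaf gives $H^1(C,\sL^\vee(F))\cong H^1(C,M^\vee)$ with dual $H^0(C,M\otimes\omega_C)$, and the vanishing $\xi_{D_W}=0$ is equivalent to $\xi_F$ pairing to zero against all sections of $M\otimes\omega_C$ that vanish on $B_W$. Castelnuovo's Theorem \ref{castelnuovo1}, applied with $D=M$, supplies the surjectivity of the multiplication maps $\mathrm{Sym}^l H^0(C,M)\otimes H^0(C,\omega_C)\to H^0(C,\omega_C(lM))$; combined with the birationality of $\phi_M$, this is what forces the adjoint to lie in $W$, hence $\xi_{D_W}=0$ for the general $W$. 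I expect this step to be the main obstacle, because producing the membership $\omega_W\in W$ from the wedge construction requires genuinely exploiting the birational geometry of $\phi_M$ and the surjectivity of these multiplication maps, not merely the tautological vanishing of $\omega_W$ on $D_W$.

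Finally I would descend from ``$\xi_{D_W}=0$ for the general pencil $W$'' to the single statement $\xi_F=0$. In dual terms this amounts to showing that $H^0(C,M\otimes\omega_C)$ is spanned by the subspaces of sections vanishing on the moving loci $B_W=\phi_M^{-1}(\Lambda_W)$: as $W$ ranges over all pencils, a general member $m\in W\subset H^0(C,M)$ ranges over all of $H^0(C,M)$, and the products $m\cdot c$ with $c\in H^0(C,\omega_C)$ vanish on $B_W$; by the case $l=1$ of Castelnuovo's surjectivity these products span $H^0(C,M\otimes\omega_C)$, whence $\xi_F$ pairs to zero against the whole space and $\xi_F=0$. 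The delicate point in this descent is that each individual $\xi_{D_W}=0$ is strictly weaker than $\xi_F=0$, so one must use that the $B_W$ genuinely move—again via $\deg\phi_M=1$ and $l\ge 3$, which make the $\Lambda_W$ sweep out $\mP^l$—to upgrade the family of vanishings into one. The hypothesis that $F$ contains no singular point is precisely what keeps every $D_W$ inside the smooth locus, so that the whole argument may be carried out with Corollary \ref{aggiunta} throughout.
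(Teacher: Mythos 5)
There is a genuine gap at the step you yourself flag as ``the main obstacle'': you never actually prove $\omega_W\in W$, and the route you sketch for it (Castelnuovo's surjectivity plus birationality of $\phi_M$ ``forcing'' the adjoint into $W$) is not an argument; in the paper Castelnuovo's theorem enters only in the final descent, not in the adjoint step. Worse, your choice of a \emph{general} pencil $W\subset H^0(C,\sL)$ undermines the strategy: for $l\geq3$ a general codimension-two linear subspace $\Lambda_W\subset\mP^l$ misses the curve $\phi_M(C)$ entirely, so the moving base locus $B_W$ of a general pencil is empty (not $\deg M$ distinct points, as you assert), $D_W=F$, and the statement ``$\xi_{D_W}=0$ for general $W$'' is literally the conclusion $\xi_F=0$ you are trying to prove --- while the adjoint $\omega_W$ then lives in the full space $H^0(C,M)$ of dimension $l+1\geq4$, where there is no reason whatsoever for it to lie in the two-dimensional $W$.

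The missing idea is to use \emph{special} pencils. Take $D=P_1+\cdots+P_{l-1}$ with the $P_i$ points of $\phi_M(C)$ in general position, pass to the sub-extension $0\to\sO_C\to\hat{\sE}\to\sL(-D-F)\to0$ with class $\hat{\xi}\in H^1(C,\sL^\vee(D+F))$, and set $W:=H^0(C,\sL(-D-F))$, which is exactly two-dimensional and, by the General Position Theorem, base point free. Because $W$ is the \emph{complete} space of sections and the sub-object of the extension is $\sO_C$, Remark \ref{casobase} identifies the adjoint as an element of $H^0(C,\det\hat{\sE})=H^0(C,\sL(-D-F))=W$, so the membership $\omega\in W$ is tautological, and Corollary \ref{aggiunta} (with $D_W=0$) gives $\hat{\xi}=0$, i.e.\ $\xi$ dies in $H^1(C,\sL^\vee(D+F))$ for every such $D$. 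Your final descent --- spanning $H^0(C,\omega_C\otimes\sL(-F))$ by the images of the spaces $H^0(C,\omega_C\otimes\sL(-D-F))$ via the $l=1$ case of Theorem \ref{castelnuovo1} --- is essentially the paper's second half and is sound once the first half is repaired; but as written the proposal does not prove the theorem.
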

\begin{proof}
Take $P_1,\ldots,P_{l-1}$ points in general position in $\phi_M(C)$ and call $D:=\sum P_i$. The hyperplanes passing through the points $P_i$ form a pencil and we take generators $\eta_1$ and $\eta_2$ of $H^0(C,\sL(-D-F))$. 

We have the following diagram
\begin{equation}
\xymatrix { 
&&0\ar[d]&0\ar[d]\\
0 \ar[r] & \sO_C \ar[r]\ar@{=}[d] &\hat{\mathcal{E}} \ar[r] \ar[d]& \mathcal{L}(-D-F) \ar[d]\ar[r]&0\\
0 \ar[r] &\sO_C\ar[r] &\mathcal{E} \ar[r]\ar[d] & \sL \ar[r]\ar[d]&0 \\
&&\mathcal{L}\otimes\mathcal{O}_{D+F}\ar@{=}[r]\ar[d]&\mathcal{L}\otimes\mathcal{O}_{D+F}\ar[d]\\
&&0&0.
}
\end{equation}
The top row is an extension associated to an element $\hat{\xi}\in H^1(C,\sL^\vee(D+F))$. The sections $\eta_1$ and $\eta_2$ can be lifted to $H^0(C,\hat{\sE})$, furthermore the space $W:=\left\langle \eta_1,\eta_2\right\rangle$ coincides with the whole space $H^0(C,\sL(-D-F))$. Hence Remark \ref{casobase} and Theorem \ref{aggiunta} can be used to deduce that $\hat{\xi}=0$, since $W$ is base point free by the General Position Theorem; see \cite[Page 109]{ACGH}. 

Thus $\xi$ is in the kernel of the map 
\begin{equation}
\label{kernel}
H^1(C,\sL^\vee)\to H^1(C,\sL^\vee(D+F))
\end{equation} for every $D$ as above. We want to show that this implies that $	\xi$ is in the kernel of 
\begin{equation}
H^1(C,\sL^\vee)\to H^1(C,\sL^\vee(F)),
\end{equation} that is $\xi_F=0$.

Since (\ref{kernel}) is surjective, using Serre duality we have the injective map
\begin{equation}
H^0(C,\omega_C\otimes\sL(-D-F))\to H^0(C,\omega_C\otimes\sL),
\end{equation} which factors as
\begin{equation}
H^0(C,\omega_C\otimes\sL(-D-F))\stackrel{\Phi_D}{\rightarrow} H^0(C,\omega_C\otimes\sL(-F))\to H^0(C,\omega_C\otimes\sL).
\end{equation} We consider now $\xi_F$ as an element of the dual of $H^0(C,\omega_C\otimes\sL(-F))$; (\ref{kernel}) implies that $\Ima \Phi_D\subset\ker\xi_F$.

Define
\begin{equation}
K:=\left\langle \bigcup_{\substack{D\in \text{Div}(C)\\\deg D=l-1}}\Ima\Phi_D\right\rangle=\left\langle \bigcup_{\substack{D\in \text{Div}(C)\\\deg D=l-1}}H^0(C,\omega_C\otimes\sL(-D-F))\right\rangle.
\end{equation} We will show that $K=H^0(C,\omega_C\otimes\sL(-F))$. To do this consider 
\begin{equation}
\begin{split}
K':&=\left\langle \bigcup_{\substack{D\in \text{Div}(C)\\\deg D=l-1}}H^0(C,\omega_C)\otimes H^0(C,\sL(-D-F))\right\rangle=\\&=H^0(C,\omega_C)\otimes\left\langle \bigcup_{\substack{D\in \text{Div}(C)\\\deg D=l-1}}H^0(C,\sL(-D-F))\right\rangle.
\end{split}
\end{equation} Since $\left\langle \bigcup_{\substack{D\in \text{Div}(C)\\\deg D=l-1}}H^0(C,\sL(-D-F))\right\rangle $ is equal to $H^0(C,\sL(-F))$ we have
\begin{equation}
K'= H^0(C,\omega_C)\otimes H^0(C,\sL(-F)).
\end{equation} Now it is easy to see that the natural morphism $K'\to K$ fits in the following diagram
\begin{equation}
\xymatrix { 
K\ar@{^{(}->}[r]&H^0(C,\omega_C\otimes\sL(-F))\\
K'\ar[u]\ar@{->>}[ur]&
}
\end{equation} The diagonal arrow is surjective by Theorem \ref{castelnuovo1} and we deduce that $K=H^0(C,\omega_C\otimes\sL(-F))$, as wanted.
\end{proof}

We have proved the main theorem of the introduction.

\subsection{Extension classes of $\omega_C$: the Gorenstein case}
Recall that an integral Gorenstein curve $C$ of arithmetic genus $p_a\geq2$ is \emph{hyperelliptic} if there exist a finite morphism $C\to\mathbb{P}^1$ of degree $2$. Next theorem is useful to check injectivity for the differential of the period map in the Gorenstein case.

\begin{thm}
\label{torelli1}
Let $C$ be an irreducible Gorenstein curve of genus $2$ or non-hyperelliptic of genus $>3$ and let $\xi\in H^1(C,\omega_C^\vee)$ such that $\partial_{\xi}=0$. Then $\xi=0$. 
\end{thm}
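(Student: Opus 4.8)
The plan is to deduce Theorem \ref{torelli1} from Theorem \ref{teorema1} applied to the dualizing sheaf $\sL=\omega_C$. First I would translate the hypothesis $\partial_\xi=0$ into the surjectivity condition (\ref{partial}): in the long exact cohomology sequence of the extension $0\to\sO_C\to\sE\to\omega_C\to0$, the image of $H^0(C,\sE)\to H^0(C,\omega_C)$ is exactly $\ker\partial_\xi$, so $\partial_\xi=0$ is precisely the surjectivity of $H^0(C,\sE)\to H^0(C,\omega_C)$. Next I would record that on an integral Gorenstein curve of genus $\geq2$ the dualizing sheaf $\omega_C$ is globally generated: for a smooth point $P$ this is the Riemann--Roch computation $h^0(\omega_C(-P))=\deg\omega_C(-P)+1-g+h^0(\sO_C(P))=g-1$ (using $h^0(\sO_C(P))=1$, valid since $g\geq1$), and at the singular points one invokes the base-point-freeness of $\omega_C$ for integral Gorenstein curves. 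Consequently the fixed part $F$ of $|\omega_C|$ is zero, the mobile part is $M=|\omega_C|$, and the conclusion $\xi_F=0$ of Theorem \ref{teorema1} reads $\xi=0$, because for $F=0$ the map $H^1(C,\omega_C^\vee)\to H^1(C,\omega_C^\vee(F))$ is the identity.

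For the main case, $C$ non-hyperelliptic of genus $g>3$, I would verify the remaining hypotheses of Theorem \ref{teorema1}: the canonical map $\phi_{\omega_C}=\phi_M$ is birational onto its image, i.e.\ of degree one, exactly because $C$ is non-hyperelliptic (the Gorenstein analogue of the classical dichotomy), and $l=\dim|M|=\dim|\omega_C|=g-1\geq3$ since $g\geq4$. Theorem \ref{teorema1} then yields $\xi_F=\xi=0$.

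The genus $2$ case falls outside the numerical range $l\geq3$ and must be handled by hand, directly through the adjoint machinery. Here $\dim H^0(C,\omega_C)=2$, and since $\partial_\xi=0$ the whole space $W:=H^0(C,\omega_C)$ lies in $\ker\partial_\xi$; moreover $|\omega_C|$ is base-point-free, so $D_W=0$ and the base locus contains no singularity. I would apply the non-singular-base-locus criterion (Corollary \ref{aggiunta} together with Remark \ref{casobase}) with $\sF=\omega_C$ and $\sL=\sO_C$: the adjoint $\omega=\Lambda^2(s_1\wedge s_2)$ lies in $H^0(C,\det\sE)=H^0(C,\omega_C)=W$, so the condition $\omega\in W$ is automatically satisfied; since $D_W=0$, the Remark following Corollary \ref{aggiunta} gives $\xi=0$.

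The main obstacle is the geometric input about the canonical system on a possibly singular integral Gorenstein curve: one needs both global generation of $\omega_C$ (to force $F=0$, so that $\xi_F=0$ upgrades to $\xi=0$) and birationality of the canonical map for non-hyperelliptic $C$. Once these classical facts are in place, the argument is a bookkeeping application of Theorem \ref{teorema1}, with the low-genus case $g=2$ requiring the separate, but elementary, direct computation above.
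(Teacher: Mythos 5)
Your proposal is correct and follows essentially the same route as the paper: the genus $2$ case is handled directly via Remark \ref{casobase} and Corollary \ref{aggiunta} using $W=H^0(C,\omega_C)$ together with the global generation of $\omega_C$ to kill $D_W$, and the genus $>3$ non-hyperelliptic case is Theorem \ref{teorema1} applied with $\sL=\omega_C$ and $F=0$. The only difference is that you spell out the verification of the hypotheses (birationality of the canonical map, $l=g-1\geq 3$, the translation of $\partial_\xi=0$ into surjectivity) which the paper leaves implicit.
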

\begin{proof}
We start with the case $p_a=2$. Take an element $\xi\in H^1(C,\omega_C^\vee)$. Since $h^0(C,\omega_C)=2$ we have that $W=\left\langle \eta_1,\eta_2\right\rangle=H^0(C,\omega_C)$. By Remark \ref{casobase} and Theorem \ref{aggiunta} we have immediately that $\xi_{D_W}=0$. It is well known, see for example \cite[Theorem 1.6]{H2}, that $H^0(C,\omega_C)$ generates $\omega_C$, hence $D_W=0$ and we are done.
If $p_a>3$ we can apply the previous theorem with $\sL=\omega_C$. Also in this case the base locus is zero, hence $\xi=0$.
\end{proof}

We want now to analyze the remaining case $p_a=3$. We will need the following 
\begin{thm}
\label{noether}
If $C$ is a non-hyperelliptic Gorenstein curve, then the homomorphism
\begin{equation}
\text{Sym}^lH^0(C,\omega_C)\to H^0(C,\omega_C^l)
\end{equation} is surjective for $l\geq1$.
\end{thm}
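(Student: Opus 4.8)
The plan is to prove the statement by separating according to the arithmetic genus $p_a$, since the Castelnuovo-type result of Theorem \ref{castelnuovo1} applies only to linear systems of dimension $r\geq3$, whereas the canonical system $|\omega_C|$ has dimension $p_a-1$. The case $l=1$ is the identity and hence trivial, so throughout we may assume $l\geq2$; moreover a non-hyperelliptic Gorenstein curve carrying non-trivial content has $p_a\geq3$, so only the ranges $p_a\geq4$ and $p_a=3$ need to be addressed.

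First I would dispose of the range $p_a\geq4$. Here $\dim|\omega_C|=p_a-1\geq3$; the system $|\omega_C|$ is base point free by \cite[Theorem 1.6]{H2}, and since $C$ is non-hyperelliptic the canonical map $\phi_{\omega_C}$ is birational onto its image (the standard dichotomy for integral Gorenstein curves, see \cite{H2}). Choosing a Cartier divisor $D\in|\omega_C|$ we have $\sO_C(D)\cong\omega_C$ and $\omega_C(lD)\cong\omega_C^{l+1}$, so Theorem \ref{castelnuovo1} gives that
\begin{equation}
\text{Sym}^lH^0(C,\omega_C)\otimes H^0(C,\omega_C)\to H^0(C,\omega_C^{l+1})
\end{equation}
is surjective for every $l\geq0$. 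Since this multiplication map factors through the natural surjection $\text{Sym}^lH^0(C,\omega_C)\otimes H^0(C,\omega_C)\to\text{Sym}^{l+1}H^0(C,\omega_C)$, the map $\text{Sym}^{l+1}H^0(C,\omega_C)\to H^0(C,\omega_C^{l+1})$ is itself surjective, which after renaming the exponent is exactly the assertion in all degrees $\geq1$.

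The remaining case $p_a=3$ is the genuine point, and is precisely where Theorem \ref{castelnuovo1} cannot be invoked because $\dim|\omega_C|=2$. Here $\deg\omega_C=4$ and $h^0(\omega_C)=3$, and I would argue that the canonical map realizes $C$ as a plane quartic: since $C$ is non-hyperelliptic, $\phi_{\omega_C}\colon C\to\mathbb{P}^2$ is birational onto its image $\Gamma$, a plane curve of degree $4$, and $p_a(\Gamma)=\binom{3}{2}=3=p_a(C)$. As a finite birational morphism of integral curves that preserves the arithmetic genus is an isomorphism, we get $C\cong\Gamma$ and $\omega_C\cong\sO_\Gamma(1)$ by adjunction. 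It then suffices to prove projective normality of the plane quartic: in the factorization
\begin{equation}
\text{Sym}^lH^0(\mathbb{P}^2,\sO(1))\to H^0(\mathbb{P}^2,\sO(l))\to H^0(\Gamma,\sO_\Gamma(l))
\end{equation}
the first map is surjective (monomials), the restriction $H^0(\mathbb{P}^2,\sO(l))\to H^0(\Gamma,\sO_\Gamma(l))$ is surjective because its cokernel injects into $H^1(\mathbb{P}^2,\sO(l-4))=0$, and $H^0(\mathbb{P}^2,\sO(1))\to H^0(\Gamma,\sO_\Gamma(1))$ is an isomorphism; composing, $\text{Sym}^lH^0(\omega_C)\to H^0(\omega_C^l)$ is surjective.

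The main obstacle is exactly this low-genus case $p_a=3$, where the uniform Castelnuovo argument breaks down and one must instead pin $C$ down as a concrete plane model. The two structural inputs I rely on — base point freeness of $|\omega_C|$ and the dichotomy ``non-hyperelliptic $\Leftrightarrow$ canonical map birational'' for integral Gorenstein curves — are not elementary and should be cited from \cite{H2}; the delicate point is that the equality $p_a(\Gamma)=p_a(C)$ forces an honest isomorphism rather than a mere birational morphism, which is what guarantees that $\omega_C$ really is the hyperplane class of a genuine plane quartic.
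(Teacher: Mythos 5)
Your proof is correct, and it is actually more complete than the paper's own, which consists of the single sentence that the statement ``follows from Theorem \ref{castelnuovo1}'' together with a pointer to Martins \cite{M}. For $p_a\geq4$ your argument is exactly the intended one: apply Theorem \ref{castelnuovo1} to a canonical divisor $D$ and then use the surjection $\text{Sym}^{l}H^0(C,\omega_C)\otimes H^0(C,\omega_C)\twoheadrightarrow \text{Sym}^{l+1}H^0(C,\omega_C)$ to pass from the bilinear statement to the symmetric one. Where you genuinely diverge is in noticing that the appeal to Theorem \ref{castelnuovo1} does not literally cover $p_a=3$, since there $\dim|\omega_C|=2<3$; the paper silently delegates this case to \cite{M}, while you resolve it yourself by identifying $C$ with its canonical image --- a plane quartic, via the observation that a finite birational morphism of integral curves preserving the arithmetic genus is an isomorphism --- and then invoking projective normality of plane curves through the vanishing of $H^1(\mathbb{P}^2,\sO(l-4))$. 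This buys a self-contained proof at the cost of two structural inputs from \cite{H2}, namely base point freeness of $|\omega_C|$ and the fact that for an integral Gorenstein curve non-hyperellipticity forces the canonical map to be birational; both are available there, and your use of them matches how the paper uses the same facts elsewhere (e.g.\ in Theorem \ref{torelli1}). Note only that, as with the rest of this section of the paper, the argument tacitly requires $C$ to be irreducible (integral), since both the degree count for the canonical image and the genus comparison rely on it.
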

\begin{proof}
This is the Max Noether theorem for Gorenstein curves. It follows from Theorem \ref{castelnuovo1}. See also \cite{M}.
\end{proof}
\begin{thm}
\label{noether1}
Let $C$ be a non-hyperelliptic Gorenstein curve and $\xi\in H^1(C,\omega_C^\vee)$. Consider the morphism
\begin{equation}
\Phi_P\colon H^1(C,\omega_C^\vee)\to H^1(C,\omega_C^\vee(P))
\end{equation} associated to a smooth point $P\in C$. If $\xi_P:=\Phi_P(\xi)=0$ for every $P$, then $\xi=0$.
\end{thm}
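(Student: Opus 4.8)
The plan is to dualize the entire situation via Serre duality and thereby reduce the statement to a spanning property of the bicanonical system $|\omega_C^2|$, which is then settled by the Max Noether theorem (Theorem \ref{noether}).

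First I would record the duality. Since $C$ is Gorenstein, Serre duality with dualizing sheaf $\omega_C$ gives natural isomorphisms
\begin{equation*}
H^1(C,\omega_C^\vee)^\vee \cong H^0(C,\omega_C^2), \qquad H^1(C,\omega_C^\vee(P))^\vee \cong H^0(C,\omega_C^2(-P)),
\end{equation*}
and the map $\Phi_P$ is identified with the transpose of the natural inclusion $\iota_P\colon H^0(C,\omega_C^2(-P))\hookrightarrow H^0(C,\omega_C^2)$ induced by $\omega_C^\vee\hookrightarrow\omega_C^\vee(P)$. Viewing $\xi$ as a linear functional on $H^0(C,\omega_C^2)$, the hypothesis $\xi_P=\Phi_P(\xi)=0$ reads exactly as: $\xi$ vanishes on the subspace $H^0(C,\omega_C^2(-P))$ for every smooth point $P$.

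Consequently $\xi$ vanishes on the linear span
\begin{equation*}
K:=\left\langle \bigcup_{P\ \mathrm{smooth}} H^0(C,\omega_C^2(-P))\right\rangle \subset H^0(C,\omega_C^2),
\end{equation*}
so to prove $\xi=0$ it suffices to show $K=H^0(C,\omega_C^2)$. Here I would invoke Theorem \ref{noether}: since $C$ is a non-hyperelliptic Gorenstein curve, the multiplication map $\mathrm{Sym}^2 H^0(C,\omega_C)\to H^0(C,\omega_C^2)$ is surjective, so $H^0(C,\omega_C^2)$ is spanned by products $s\cdot t$ with $s,t\in H^0(C,\omega_C)$. For such a product with $s\neq0$, if $s$ vanishes at a smooth point $P$ then $s\cdot t\in H^0(C,\omega_C^2(-P))\subset K$; hence it is enough to realize a spanning set of products with $s$ vanishing at a smooth point. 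Since $|\omega_C|$ is base point free on the non-hyperelliptic Gorenstein curve $C$ (see \cite[Theorem 1.6]{H2}), a general $s\in H^0(C,\omega_C)$ has its whole nonempty zero divisor (of degree $2p_a-2>0$) supported on the smooth locus. As spanning is an open condition on the chosen products, one may select a basis of $H^0(C,\omega_C^2)$ among products $s\cdot t$ with each $s$ general, hence vanishing only at smooth points; each such product lies in $K$, so $K=H^0(C,\omega_C^2)$ and $\xi=0$.

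The step I expect to be the main obstacle is the last one: controlling base loci on a \emph{singular} Gorenstein curve, that is, guaranteeing that a spanning family of bicanonical sections can be written as products $s\cdot t$ with $s$ vanishing at a \emph{smooth} point rather than only at the finitely many singularities. This rests on base point freeness of $|\omega_C|$ (so that a general member meets $C$ only in smooth points) together with the genericity argument that the open dense set of spanning choices still meets the open dense set of sections avoiding the singular points. It is precisely the surjectivity in Theorem \ref{noether} that reduces the problem to these products and makes the reduction to $K=H^0(C,\omega_C^2)$ effective.
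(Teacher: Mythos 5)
Your proof is correct and takes essentially the same route as the paper's, which simply repeats the argument of Theorem \ref{teorema1} with Max Noether's theorem (Theorem \ref{noether}) in place of Theorem \ref{castelnuovo1}: Serre duality identifies $\xi$ with a functional on $H^0(C,\omega_C^{2})$ killing every $H^0(C,\omega_C^{2}(-P))$ with $P$ smooth, and the surjectivity of $\mathrm{Sym}^2H^0(C,\omega_C)\to H^0(C,\omega_C^{2})$ shows these subspaces span. Your genericity argument (choosing a basis of products $s\cdot t$ with $s$ vanishing only at smooth points) is a harmless variant of the paper's final step, which instead notes that the hyperplanes $H^0(C,\omega_C(-P))$, $P$ smooth, already span $H^0(C,\omega_C)$.
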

\begin{proof}
The proof follows closely the proof of Theorem \ref{teorema1}, with the only exception that it uses the Max Noether theorem instead of Theorem \ref{castelnuovo1}.
\end{proof}
We can now prove the following
\begin{thm}
\label{torelli2}
Let $C$ be an irreducible Gorenstein curve of genus $2$ or non-hyperelliptic of genus $\geq3$, $\xi\in H^1(C,\omega_C^\vee)$ such that $\partial_{\xi}=0$. Then $\xi=0$. 
\end{thm}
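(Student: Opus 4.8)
The statement is the Corollary \ref{corint} announced in the introduction, and it differs from Theorem \ref{torelli1} only in that it also allows non-hyperelliptic curves of arithmetic genus exactly $3$. The plan is therefore to reduce everything to what is already proved: if $p_a=2$, or if $C$ is non-hyperelliptic with $p_a>3$, the conclusion is precisely Theorem \ref{torelli1}, so there is nothing to do. Hence the only genuinely new case is that of a non-hyperelliptic Gorenstein curve with $p_a=3$, and the rest of the argument treats this case.

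For $p_a=3$ I would invoke Theorem \ref{noether1}: it suffices to prove that $\xi_P=\Phi_P(\xi)=0$ for every smooth point $P\in C$. Fix such a $P$. Under the identification $H^1(C,\omega_C^\vee(P))\cong\text{Ext}^1(\omega_C(-P),\sO_C)$ the class $\xi_P$ is represented by the extension $0\to\sO_C\to\sE_P\to\omega_C(-P)\to0$ obtained by pulling back the extension $0\to\sO_C\to\sE\to\omega_C\to0$ defined by $\xi$ along the inclusion $\omega_C(-P)\hookrightarrow\omega_C$. Riemann--Roch together with Serre duality on the Gorenstein curve $C$ gives $h^0(C,\omega_C(-P))=2$, since $h^1(C,\omega_C(-P))=h^0(C,\sO_C(P))=1$ as $C$ is not rational. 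Moreover, because $\sE_P$ is a pullback, the connecting homomorphism $\partial_{\xi_P}$ factors as $\partial_\xi\circ\iota$, where $\iota\colon H^0(C,\omega_C(-P))\to H^0(C,\omega_C)$ is induced by the inclusion; as $\partial_\xi=0$ by hypothesis, we get $\partial_{\xi_P}=0$. Thus the whole space $W:=H^0(C,\omega_C(-P))$, of dimension exactly $2$, lies in $\ker\partial_{\xi_P}$.

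Now I would apply the adjoint machinery to $\sF=\omega_C(-P)$, $\sL=\sO_C$ and this $W$. First, since $\omega_C$ is very ample on a non-hyperelliptic integral Gorenstein curve (see \cite{H2}), separation of points and tangent vectors yields $h^0(C,\omega_C(-P-Q))=1<2=h^0(C,\omega_C(-P))$ for every point $Q$; hence $|\omega_C(-P)|$ is base point free, so in the notation of Corollary \ref{aggiunta} we have $D_W=0$ and the base locus avoids $\Sing C$. Secondly, the very reason for passing from $\omega_C$ to $\omega_C(-P)$ is that $\det\sE_P\cong\omega_C(-P)$ and $\dim W=2=h^0(C,\det\sE_P)$ force $W=H^0(C,\det\sE_P)$; by Remark \ref{casobase} the adjoint $\omega$ then lies in $W$ automatically. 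Corollary \ref{aggiunta} therefore gives $\xi_P=(\xi_P)_{D_W}=0$. Since this holds for every smooth $P$, Theorem \ref{noether1} yields $\xi=0$, as required.

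The step I expect to be most delicate is the control of the singularities of $C$. Concretely, one must verify that $|\omega_C(-P)|$ has no base points on $\Sing C$, so that the hypothesis of Corollary \ref{aggiunta} is met and its conclusion is literally $\xi_P=0$ rather than the vanishing of $\xi_P$ twisted down by a nonzero effective generalized divisor; and one must check that the factorization $\partial_{\xi_P}=\partial_\xi\circ\iota$ is the correct compatibility for the pulled-back extension $\sE_P$. Both points rest on the theory of generalized divisors and on the very ampleness of $\omega_C$ established in \cite{H2}; granting them, the genus $3$ case is formal, and combined with Theorem \ref{torelli1} it completes the proof.
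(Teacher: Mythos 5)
Your proposal is correct and follows essentially the same route as the paper: reduce to the non-hyperelliptic $p_a=3$ case via Theorem \ref{torelli1}, pull the extension back along $\omega_C(-P)\hookrightarrow\omega_C$, use $h^0(C,\omega_C(-P))=2$ so that the adjoint automatically lies in $W=H^0(C,\omega_C(-P))$, conclude $\xi_P=0$ from Corollary \ref{aggiunta} with $D_W=0$, and finish with Theorem \ref{noether1}. Your explicit justifications of base-point-freeness via very ampleness of $\omega_C$ and of the compatibility $\partial_{\xi_P}=\partial_\xi\circ\iota$ only make explicit what the paper leaves implicit.
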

\begin{proof}
Everything except the case $p_a=3$ is proved in Theorem \ref{torelli1}. So assume that $p_a=3$ and take a point $P\in C$. By \cite[Proposition 1.5]{H2} we have 
\begin{equation}
h^0(C,\omega_C(-P))=h^0(C,\omega_C)-1=2.
\end{equation}
Take $\xi\in H^1(C,\omega_C^\vee)$ and consider the extension associated to $\xi_P$, that is the first row of the diagram
\begin{equation}
\xymatrix { 
&&0\ar[d]&0\ar[d]\\
0 \ar[r] & \sO_C \ar[r]\ar@{=}[d] &\hat{\mathcal{E}} \ar[r] \ar[d]& \omega_C(-P) \ar[d]\ar[r]&0\\
0 \ar[r] &\sO_C\ar[r] &\mathcal{E} \ar[r]\ar[d] & \omega_C \ar[r]\ar[d]&0 \\
&&\omega_C\otimes\mathcal{O}_{P}\ar@{=}[r]\ar[d]&\omega_C\otimes\mathcal{O}_{P}\ar[d]\\
&&0&0.
}
\end{equation} By the fact that $\omega_C(-P)$ has only two linearly independent global sections it follows that if we construct the adjoint $\omega$ starting from a base $\eta_1,\eta_2$ of $H^0(C,\omega_C(-P))$, then the adjoint is forced to be a linear combination of $\eta_1,\eta_2$. Hence Theorem \ref{aggiunta} can be applied and $\xi_P=0$ for every $P\in C$ (as in Theorem \ref{torelli1} we use the fact that there are no base points). By Theorem \ref{noether1}, $\xi=0$ and we are done.
\end{proof}

\subsection{Infinitesimal deformations of Gorenstein curves} The above analysis of the extensions of the dualizing sheaf $\omega_C$ of a Gorenstein curve $C$ gives information also on the infinitesimal deformations of $C$. Take $\xi\in\text{Ext}^1(\Omega_C^1,\sO_C)$ in the kernel of (\ref{localtoglobal1}). By (\ref{isomorfismo}), $\xi$ can be seen equivalently as an element of $H^1(C,\mathcal{H}\textit{om}(\Omega_C^1,\sO_C))$ or of $\text{Ext}^1(\hat{\omega},\sO_C)$. Now apply the functor $\text{Hom}(-,\sO_C)$ to (\ref{seq2}) to obtain
{\footnotesize
\begin{equation}
0\to \text{Hom}(\omega_C,\sO_C)\to \text{Hom}(\hat{\omega},\sO_C)\to\text{Ext}^1(N,\sO_C)\to\text{Ext}^1(\omega_C,\sO_C)\to\text{Ext}^1(\hat{\omega},\sO_C)\to0.
\end{equation}}
Therefore we can find a lifting $\tilde{\xi}\in\text{Ext}^1(\omega_C,\sO_C) $ of $\xi$. Of course if $\tilde{\xi}$ is zero, then also $\xi$ is zero. We have proved the following
\begin{thm}
\label{deformazionigorenstein} Let $C$ be an irreducible Gorenstein curve and let $\xi\in\text{{\rm Ext}}^1(\Omega^1_C,\sO_C)$ be an infinitesimal deformation of $C$. Assume that $\xi\in \ker (\mu\colon\text{{\rm Ext}}^1(\Omega^1_C,\sO_C)\to R)$, see Proposition \ref{localtoglobal1}. Then $\xi$ can be lifted to an element $\tilde{\xi}$ of $\text{{\rm Ext}}^1(\omega_C,\sO_C)$.  Moreover if $C$ is of genus $2$ or non-hyperelliptic of genus $\geq3$ and 
$\partial_{\tilde{\xi}}=0$ then $\xi=0$.
\end{thm}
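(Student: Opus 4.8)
The plan is to separate the two assertions: the existence of the lift is purely homological, whereas the vanishing conclusion is a direct appeal to Theorem \ref{torelli2}, where the genuine geometric content already resides. First I would record that, by Proposition \ref{proposizione1}, the hypothesis $\xi\in\ker\mu$ places $\xi$ in $H^1(C,\mathcal{H}\textit{om}(\Omega_C^1,\sO_C))$, which by the isomorphism (\ref{isomorfismo}) is $\text{Ext}^1(\hat{\omega},\sO_C)$. To produce the lift I would apply $\text{Hom}(-,\sO_C)$ to the short exact sequence (\ref{seq2}), namely $0\to\hat{\omega}\to\omega_C\to N\to0$, obtaining a long exact sequence whose rightmost portion is
\[
\text{Ext}^1(N,\sO_C)\to\text{Ext}^1(\omega_C,\sO_C)\to\text{Ext}^1(\hat{\omega},\sO_C)\to0.
\]
The surjectivity on the right is the one point I would check with care: it holds because $N$ is supported on points and, $C$ being Gorenstein, $\sO_C$ has injective dimension one locally, so the next term $\text{Ext}^2(N,\sO_C)$ vanishes. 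This surjectivity yields a preimage $\tilde{\xi}\in\text{Ext}^1(\omega_C,\sO_C)$ of $\xi$, establishing the first claim.

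For the second claim I would use that $\omega_C$ is invertible, so $\text{Ext}^1(\omega_C,\sO_C)\cong H^1(C,\omega_C^\vee)$ and $\tilde{\xi}$ is precisely an element to which Theorem \ref{torelli2} applies. Under the stated genus hypothesis, the vanishing $\partial_{\tilde{\xi}}=0$ forces $\tilde{\xi}=0$ by that theorem. Since $\tilde{\xi}$ maps onto $\xi$ under the surjection above, $\tilde{\xi}=0$ immediately gives $\xi=0$ in $\text{Ext}^1(\hat{\omega},\sO_C)$, hence $\xi=0$ in $\text{Ext}^1(\Omega_C^1,\sO_C)$ through the identifications of Proposition \ref{proposizione1}.

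I do not expect a serious obstacle here, since the statement is essentially a corollary that transports Theorem \ref{torelli2} from extensions of $\omega_C$ to honest, locally trivial infinitesimal deformations. The one subtlety worth flagging is that the lift $\tilde{\xi}$ is not canonical: its indeterminacy is the image of $\text{Ext}^1(N,\sO_C)$. Consequently the statement must be read as asserting that, for a lift satisfying $\partial_{\tilde{\xi}}=0$, one concludes $\tilde{\xi}=0$ and therefore $\xi=0$; because $\tilde{\xi}=0$ projects trivially to $\xi=0$, this non-uniqueness does no harm.
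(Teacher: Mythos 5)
Your proposal is correct and follows essentially the same route as the paper: the lift $\tilde{\xi}$ is obtained from the surjection $\text{Ext}^1(\omega_C,\sO_C)\to\text{Ext}^1(\hat{\omega},\sO_C)$ in the long exact sequence of $\text{Hom}(-,\sO_C)$ applied to $0\to\hat{\omega}\to\omega_C\to N\to0$, and the vanishing is then Theorem \ref{torelli2}. Your explicit justification of that surjectivity via $\text{Ext}^2(N,\sO_C)=0$ (using that $N$ is a skyscraper and $\sO_C$ has local injective dimension one) and your remark on the non-canonicity of the lift are welcome clarifications of points the paper leaves implicit.
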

\begin{rmk}
In general it seems not a trivial problem to use the condition $\partial_{\tilde{\xi}}=0$ to obtain $\partial_\xi=0$.
\end{rmk}

\section{Reducible nodal curves}

Unlike the irreducible case, for reducible Gorenstein curves there is no hope to have, in general, an infinitesimal Torelli-type theorem. Consider $C$ a nodal curve with two components meeting transversely in $n$ nodes. Note that all the results of Section \ref{sezione1} still hold in this case.

Sequence (\ref{localtoglobal2}) and isomorphism (\ref{isonorm}) give the short exact sequence
\begin{equation}
\label{riducibile}
0\to H^1(C_1,\Theta_{C_1}(-\sum P_i))\oplus H^1(C_2,\Theta_{C_2}(-\sum P_i))\to \text{Ext}^1(\Omega_C^1,\sO_C)\to \bigoplus_{i=1}^n\mathbb{C}_{P_i}\to0
\end{equation} where $C_1$ and $C_2$ are the disjoint components of the normalization of $C$. From now on call $D:=\sum P_i$ the divisor on $C_i$, $i=1,2$, induced by the intersection points.

As in the irreducible case we study the infinitesimal deformations coming from the kernel of this sequence, that is the deformations $\xi\in \text{Ext}^1(\Omega_C^1,\sO_C)$ which can be written as $\xi=\xi_1\oplus\xi_2$, with $\xi_i\in H^1(C_i,\Theta_{C_i}(-D))$ associated to an exact sequence 
\begin{equation}
\label{seqi}
0\to \sO_{C_i}\to \sE_i\to \omega_{C_i}(D)\to 0.
\end{equation}

Note that the curves $C_i$ are smooth, hence $\Omega_{C_i}^1$ is the dualizing sheaf $\omega_{C_i}$ of $C_i$.

%Consider now $\xi_i$. It is associated to an exact sequence 
%\begin{equation}
%0\to \sO_{C_i}\to \sE_i\to \omega_{C_i}(P)\to 0.
%\end{equation}

\begin{defn}
We say that $\xi$ satisfies the \emph{split liftability condition} if the deformations $\xi_i$, $i=1,2$, lift all the corresponding global sections in (\ref{seqi}), that is $\partial_{\xi_1}=\partial_{\xi_2}=0$. 
\end{defn}

In the final subsections we present explicit cases where even if $\xi$ satisfies the split liftability condition we have that $\xi\neq0$. We want to stress that in the last examples presented in this paper we have also that $\partial_\xi=0$. In other words the notation $\xi=\xi_1\oplus\xi_2$ does not force the implication $\partial_{\xi_i}=0\Rightarrow\partial_\xi=0$. 

To see which deformations can be used to provide such examples note that the image of $\xi_i$ in $H^1(C_i,\Theta_{C_i})$ gives an extension of $\omega_{C_i}$ which fits into the following diagram
\begin{equation}
\label{diagramma2}
\xymatrix { 
&&0\ar[d]&0\ar[d]\\
0 \ar[r] & \sO_{C_i} \ar[r]\ar@{=}[d] &\hat{\mathcal{E}_i} \ar[r] \ar[d]& \omega_{C_i} \ar[d]\ar[r]&0\\
0 \ar[r] &\sO_{C_i}\ar[r] &\mathcal{E}_i \ar[r]\ar[d] & \omega_{C_i}(D) \ar[r]\ar[d]&0 \\
&&\omega_{C_i}\otimes\mathcal{O}_{D}\ar@{=}[r]\ar[d]&\omega_{C_i}\otimes\mathcal{O}_{D}\ar[d]\\
&&0&0.
}
\end{equation} 

Now assume that the map $H^0(C_i,\sE_i) \to H^0(C_i, \omega_{C_i}(D) )$ is surjective, that is $\partial_{\xi_i}=0$, then it is easy to see that $H^0(C_i,\hat{\sE_i}) \to H^0(C_i, \omega_{C_i})$ is also surjective. If, for example, $C_i$ is of genus 2 or non-hyperelliptic of genus $\geq3$, then we deduce that the first row of (\ref{diagramma2}) splits. Hence $\xi_i$ is in the kernel of 
\begin{equation}
H^1(C_i,\Theta_{C_i}(-D))\to H^1(C_i,\Theta_{C_i}).
\end{equation} By the exact sequence 
\begin{equation}
0\to \Theta_{C_i}(-D)\to \Theta_{C_i}\to \Theta_{C_i}|_{D}\to0
\end{equation} we know that this kernel is not zero.

Thus a deformation $\xi_i$ with $\partial_{\xi_i}=0$ is not necessarily the trivial deformation even in the usual case with $C_i$ of genus 2 or non-hyperelliptic of genus $\geq3$. 

\subsection{Split liftability and nonzero deformations}
Consider $C=R+\hat{C}$ with $R$ a smooth rational curve and $\hat{C}$ an arbitrary smooth curve meeting transversely in $n$ points. We want to study how the situation varies with $n$.

If $n=1$, sequence (\ref{riducibile}) is
\begin{equation}
0\to H^1(\hat{C},\Theta_{\hat{C}}(-P))\oplus H^1(R,\Theta_{R}(-P))\to \text{Ext}^1(\Omega_C^1,\sO_C)\to \mathbb{C}_P\to0.
\end{equation} In this case $\omega_{\hat{C}}(P)$ has the base point $P$ hence an infinitesimal deformation of $\hat{C}$ that lifts everything may be different from zero and supported on $P$; see Theorem \ref{teorema1}.

If $n=2$, the situation is more complicated. Sequence (\ref{riducibile}) is 
\begin{equation}
0\to H^1(\hat{C},\Theta_{\hat{C}}(-P-Q))\to \text{Ext}^1(\Omega_C^1,\sO_C)\to \mathbb{C}_P\oplus\mC_Q\to0
\end{equation} because $H^1(R,\Theta_{R}(-P-Q))=g(R)=0$ since $R$ is rational.
This means that our deformation $\xi\in \text{Ext}^1(\Omega_C^1,\sO_C)$ comes from an element $\xi_{\hat{C}}\in H^1(\hat{C},\Theta_{\hat{C}}(-P-Q))$ (recall that we are studying the deformations in the kernel of (\ref{riducibile})). Note that the sheaf $\omega_{\hat{C}}(P+Q)$ is base point free. Now if $\hat{C}$ is of genus 2 or non-hyperelliptic of genus $\geq3$ the map induced by the linear system $\omega_{\hat{C}}(P+Q)$ is of rank one since the canonical map of $\hat{C}$ is already of rank one. Hence in this case Theorem \ref{teorema1} applies and the lifting hypothesis $\partial_{\xi_{\hat{C}}}=0$ implies $\xi_{\hat{C}}=0$. If $\hat{C}$ is hyperelliptic, the same conclusions holds if $P+Q$ is not the $g^1_2$ linear system. In fact the map given by $\omega_{\hat{C}}(P+Q)$ has degree $\leq2 $ since the canonical map has degree $2$ and we prove that it has degree $2$ only if $P+Q$ is the $g^1_2$. In fact assume that the degree is $2$, then the image of $\hat{C}$ in $\mP(H^0(\omega_{\hat{C}}(P+Q))^\vee)=\mP^g$ is a (nondegenerate) curve of degree $g$. Hence it is projectively isomorphic to the rational normal curve; see \cite[Page 179]{GH}. The morphism factors as
\begin{equation}
\hat{C}\stackrel{2:1}{\rightarrow}\mP^1\to\mP^g
\end{equation} and $P+Q$ is exactly the $g^1_2$. 

If $n=3$, sequence (\ref{riducibile}) is
\begin{equation}
0\to H^1(\hat{C},\Theta_{\hat{C}}(-\sum_{i=1}^3 P_i))\to \text{Ext}^1(\Omega_C^1,\sO_C)\to \bigoplus_{i=1}^3\mathbb{C}_{P_i}\to0
\end{equation} since $H^1(R,\Theta_{R}(-\sum P_i))=0$ by duality. Hence as in the previous case we deal only with $\xi_{\hat{C}}$. Assume that $\partial_{\xi_{\hat{C}}}=0$. Since $\omega_{\hat{C}}(\sum P_i)$ is very ample, by Theorem \ref{teorema1} we have that $\xi_{\hat{C}}=0$ and we deduce that $\xi=0$. This means that in this case the split liftability condition implies that $\xi=0$.

On the other hand assume that $n>3$. Sequence (\ref{riducibile}) is
\begin{equation}
0\to H^1(\hat{C},\Theta_{\hat{C}}(-\sum P_i))\oplus H^1(R,\Theta_{R}(-\sum P_i)) \to \text{Ext}^1(\Omega_C^1,\sO_C)\to \bigoplus_{i=1}^n\mathbb{C}_{P_i}\to0.
\end{equation} Now $H^1(R,\Theta_{R}(-D))\neq 0$, so take a nonzero element $\xi_R$. It corresponds to a sequence
\begin{equation}
0\to\sO_R\to \sE_R\to \omega_R(\sum P_i)\to0.
\end{equation} By the fact that $H^1(R,\sO_R)=0$, we immediately deduce that $\partial_{\xi_R}=0$, whereas $\xi_R\neq0$. Hence taking $\xi=0\oplus \xi_R$ we have that in this case the split liftability condition does not imply that $\xi=0$.

\subsection{Liftability of global sections and nonzero deformations}

Take a plane curve $C=C_1+C_2$ where $C_1$ is a smooth quadric and $C_2$ is a smooth cubic. As before we are interested in the deformations $\xi\in\text{Ext}^1(\hat{\omega},\sO_C)=H^1(C,\hat{\omega}^\vee)$. To show that infinitesimal Torelli theorem does not hold in this case we have to show that the map 
\begin{equation}
H^1(C,\hat{\omega}^\vee)\to H^0(C,\hat{\omega})^\vee\otimes H^1(C,\sO_C)
\end{equation} is not injective. This map is exactly (\ref{problema}) in the case we are studying. Dualizing, we have to show that
\begin{equation}
\label{mappasur}
H^0(C,\omega_C)\otimes H^0(C,\hat{\omega})\to H^0(C,\hat{\omega}\otimes \omega_C)
\end{equation} is not surjective.

By \cite[Sequence 2.13 page 91]{ACG} we have that $\hat{\omega}\cong\nu_*(\omega_{\tilde{C}})$, so we can easily compute the dimensions of the vector spaces appearing in (\ref{mappasur}). We obtain that $h^0(C,\omega_C)=6$ and $h^0(C,\hat{\omega})=1$. On the other hand $\nu^*(\omega_C)=\omega_{\tilde{C}}(D)$ where $D$ is the divisor induced by the intersection points on the normalization $\tilde{C}$; see \cite[Page 101]{ACG}. Hence we have by the projection formula
\begin{equation}
h^0(\hat{\omega}\otimes \omega_C)=h^0(\nu_*(\omega_{\tilde{C}})\otimes\omega_C)=h^0(\nu_*(\omega_{\tilde{C}}\otimes\omega_{\tilde{C}}(D)))=9.
\end{equation}
Hence (\ref{mappasur}) is not surjective.

Another example is given by $C=C_1+C_2$ with $C_1$ a smooth quartic and $C_2$ a line. We consider again the map
\begin{equation}
H^0(C,\omega_C)\otimes H^0(C,\hat{\omega})\to H^0(C,\hat{\omega}\otimes \omega_C).
\end{equation} This map fits into the diagram
\begin{equation}
\xymatrix { &0\ar[d]\\
H^0(C,\omega_C)\otimes H^0(C,\hat{\omega})\ar[d]\ar[r]&H^0(C,\hat{\omega}\otimes \omega_C)\ar[d]\\
\text{Sym}^2H^0(C,\omega_C)\ar[r]&H^0(C,\omega_C^{\otimes2}).
}
\end{equation} The second row is surjective by \cite[Theorem 1]{BB} and it is easy to compute that its kernel $K$ has dimension $6$. Call $V$ the image of $H^0(C,\omega_C)\otimes H^0(C,\hat{\omega})$ in $\text{Sym}^2H^0(C,\omega_C)$. $V$ has dimension $15$ whereas $h^0(C,\hat{\omega}\otimes \omega_C)=11$, hence it will be enough to prove that $K':=\ker(V\to H^0(C,\hat{\omega}\otimes \omega_C))$ has dimension $\geq5$.

We have that $h^0(C,\hat{\omega})=3$ and $h^0(C,\omega_C)=6$, hence take a basis $l_1,l_2,l_3$ of $H^0(C,\hat{\omega})$ and complete it to a basis $l_1,l_2,l_3,b_1,b_2,b_3$ of $H^0(C,\omega_C)$. The sheaf $\omega_C$ is by adjunction $\sO_C(2)$ and its global sections can be seen as quadrics restricted to $C$. Recall by Section \ref{sezione1} that $\hat{\omega}$ is the subsheaf of $\omega_C$ consisting of the sections vanishing on the nodes of $C$, therefore $l_1,l_2,l_3$ are quadrics vanishing on the four nodes of $C$, and hence on the line $C_2$.

Now the elements of $K$ are quadrics vanishing on the canonical image of $C\subset\mP^5$ and an element of $K$ is in $K'$ if it is zero when restricted on the plane $(l_1=l_2=l_3=0)\subset\mP^5$.
The image of $C_2$ in $\mP^5$ is a smooth rational curve contained in this plane and it is easy to see by the linear independence of $l_1,l_2,l_3,b_1,b_2,b_3$ that the dimension of $K'$ is at most one less than the dimension of $K$, hence we are done.

\end{document}